\renewcommand{\subsubsection}{\paragraph}
	\let\over=\@@over \let\overwithdelims=\@@overwithdelims
	\let\atop=\@@atop \let\atopwithdelims=\@@atopwithdelims
  	\let\above=\@@above \let\abovewithdelims=\@@abovewithdelims
\tikzstyle{int}=[draw, fill=blue!20, minimum size=2em]
\tikzstyle{dot}=[circle, draw, fill=blue!20, minimum size=2em]
\tikzstyle{init} = [pin edge={to-,thin,black}]
\newcommand{\mreals}{\ensuremath{\mathbb{R}}}
\newcommand{\supp}{\ensuremath{\mathrm{supp}}}
	\newcommand{\eqref}[1]{~(\ref{#1})}
	\def\mod{\mathop{\rm mod}}
\def\argmax{\mathop{\rm argmax}}
\def\exp{\mathop{\rm exp}}
\def\Var{\mathrm{Var}}
\def\PP{\mathbb{P}}
\def\eqdef{\triangleq}
\newcommand{\pen}{\textrm{pen}}
\newcommand{\Lopt}{L_{\rm opt}}
\newcommand{\Exp}{\mathrm{Exp}}
\newcommand{\Law}{\mathrm{Law}}
\newcommand{\utheta}{\underline{\theta}}
\newcommand{\otheta}{\overline{\theta}}
\newcommand{\SG}{\mathsf{SG}}
\newcommand{\xmin}{x_{\min}}
\newcommand{\xmax}{x_{\max}}
\newcommand{\thetamin}{\theta_{\min}}
\newcommand{\thetamax}{\theta_{\max}}
\newcommand{\NPMLE}{\mathrm{NPMLE}}
\newcommand{\NPMLEpi}{\hat{\pi}_{\NPMLE}}
\newcommand{\Poi}{\mathrm{Poi}}
\newcommand{\reals}{\mathbb{R}}
\newcommand{\naturals}{\mathbb{N}}
\newcommand{\integers}{\mathbb{Z}}
\newcommand{\complex}{\mathbb{C}}
\newcommand{\Expect}{\mathbb{E}}
\newcommand{\Prob}{\mathbb{P}}
\newcommand{\prob}[1]{\mathbb{P}\left[#1\right]}
\newcommand{\TV}{{\rm TV}}
\newcommand{\iid}{i.i.d.\xspace}
\newcommand{\pth}[1]{\left( #1 \right)}
\newcommand{\sth}[1]{\left\{ #1 \right\}}
\newcommand{\iiddistr}{{\stackrel{\text{\iid}}{\sim}}}
\newcommand{\indc}[1]{{\mathbf{1}_{\left\{{#1}\right\}}}}
\definecolor{myblue}{rgb}{.8, .8, 1}
\definecolor{mathblue}{rgb}{0.2472, 0.24, 0.6} 
\definecolor{mathred}{rgb}{0.6, 0.24, 0.442893}
\definecolor{mathyellow}{rgb}{0.6, 0.547014, 0.24}
\newcommand{\calI}{{\mathcal{I}}}
\newcommand{\calM}{{\mathcal{M}}}
\newcommand{\calP}{{\mathcal{P}}}
\newcommand{\calX}{{\mathcal{X}}}
\newcommand{\diverge}{\to \infty}
\def\unifto{\mathop{{\mskip 3mu plus 2mu minus 1mu%
	\setbox0=\hbox{$\mathchar"3221$}%
	\raise.6ex\copy0\kern-\wd0%
	\lower0.5ex\hbox{$\mathchar"3221$}}\mskip 3mu plus 2mu minus 1mu}}
\def\simleq{{{\mskip 3mu plus 2mu minus 1mu%
	\setbox0=\hbox{$\mathchar"013C$}%
	\raise.2ex\copy0\kern-\wd0%
	\lower0.9ex\hbox{$\mathchar"0218$}}\mskip 3mu plus 2mu minus 1mu}}
\def\simleq{\lesssim}
\def\simgeq{{{\mskip 3mu plus 2mu minus 1mu%
	\setbox0=\hbox{$\mathchar"013E$}%
	\raise.2ex\copy0\kern-\wd0%
	\lower0.9ex\hbox{$\mathchar"0218$}}\mskip 3mu plus 2mu minus 1mu}}
\def\simgeq{\gtrsim}
\newtheorem{theorem}{Theorem}
\newtheorem{lemma}[theorem]{Lemma}
\newtheorem{coro}[theorem]{Corollary}
\theoremstyle{definition}
\newtheorem{example}{Example}
\newtheorem{remark}{Remark}
\newif\ifmapx
\edef\jobnametmp{\expandafter\string\csname npmle2_apx\endcsname}
\edef\jobnameapx{\expandafter\mkillslash\jobnametmp}
\edef\jobnameexpand{\jobname}
\long\def\apxonly#1{\ifmapx{\color{blue}#1}\fi}
\renewcommand{\hat}{\widehat}
\renewcommand{\tilde}{\widetilde}
\begin{document}
\ifpdf
\DeclareGraphicsExtensions{.pgf}
\graphicspath{{figures/}{plots/}}
\fi

\title{Self-regularizing Property of Nonparametric Maximum Likelihood Estimator in Mixture Models}

\author{Yury Polyanskiy and Yihong Wu\thanks{Y.P. is with the Department of EECS, MIT, Cambridge, MA, email: \url{yp@mit.edu}. Y.W. is with
the Department of Statistics and Data Science, Yale University, New Haven, CT, email: \url{yihong.wu@yale.edu}.}}

\maketitle

\begin{abstract}

Introduced by Kiefer and Wolfowitz \cite{KW56}, the nonparametric maximum likelihood estimator (NPMLE) is a widely used methodology for learning mixture models and empirical Bayes estimation. 
Sidestepping the non-convexity in mixture likelihood, the NPMLE estimates
the mixing distribution by maximizing the total likelihood over the space of probability measures, which can be viewed
as an extreme form of overparameterization.

In this paper we discover a surprising property of the NPMLE solution. Consider,
for example, a Gaussian mixture model on the real line with a subgaussian mixing distribution. Leveraging
complex-analytic techniques, we show that with high probability the NPMLE based on a sample of size $n$ has $O(\log n)$ atoms (mass points), significantly improving the deterministic upper bound of $n$ due to Lindsay \cite{lindsay1983geometry1}. Notably, any such Gaussian mixture is statistically 
 indistinguishable from a finite one with $O(\log n)$ components (and this is tight for certain mixtures).
Thus, absent any explicit form of model selection, NPMLE automatically chooses the right model complexity, a property we term \emph{self-regularization}. Extensions to other exponential families are given. As a statistical application, we show that this structural property can be harnessed to bootstrap existing Hellinger risk bound of the (parametric) MLE for finite Gaussian mixtures to the NPMLE for general Gaussian mixtures, recovering a result of Zhang \cite{zhang2009generalized}.
\end{abstract}

\tableofcontents

\section{Introduction}
\label{sec:intro}




\emph{Nonparametric maximum likelihood estimator} (NPMLE) is a useful methodology for various statistical problems such
as density estimation, regression, censoring model, deconvolution, and mixture models  (see the monographs
\cite{groeneboom1992information,groeneboom2014nonparametric}). 
Oftentimes optimizing over a massive (infinite-dimensional) parameter space can lead to undesirable properties, such as non-existence\footnote{For example, it is easy to see that NPMLE for the class of unimodal densities does not exist.} and roughness, and runs the risk of overfitting. These shortcomings can be remedied by the method of sieves \cite{grenander1981abstract} or explicit regularization \cite{good1971nonparametric,silverman1982estimation} at the expense of losing the main advantages of the NPMLE -- the full adaptivity (tuning parameters-free) and the computational tractability. 
However, for certain problems including shape constraints (such as monotonicity
\cite{grenander1956theory,birge1989grenader} and log-concavity
\cite{dumbgen2009maximum,cule2010maximum,doss2016global,kim2016global}) and mixture models
\cite{Lindsay1995,zhang2009generalized,saha2020nonparametric}, a striking observation is that \emph{unpenalized} NPMLE
achieves superior performance and has become the method of choice for both theoretical investigation and practical
computation.  While basic structural properties of NPMLE has been well understood, these results are frequently too
conservative to explain its superior statistical performance.
This paper studies the \emph{typical} structure of NPMLE for mixture models
as well as its statistical consequences.

Consider a parametric family of densities $\{p_\theta: \theta\in\Theta\}$ with respect to some dominating measure $\mu$ on $\reals$, where the parameter space $\Theta$ is assumed to be a subset of $\reals$. Given a mixing distribution (prior) $\pi$ on $\Theta$, we denote the induced mixture density as:
\begin{equation}
p_\pi(x) \triangleq \int_\Theta p_\theta(x) \pi(d\theta).
\label{eq:mixture}
\end{equation}
Introduced by Kiefer and Wolfowitz \cite{KW56} (see also an earlier abstract by Robbins \cite{robbins1950generalization}), the NPMLE for the mixing distribution is defined as a maximizer of the mixture likelihood given $n$ data points $x_1,\ldots,x_n$:
\begin{equation}
\NPMLEpi \in  \arg\max_{\pi \in \calM(\Theta)} \frac{1}{n} \sum_{i=1}^n \log p_\pi(x_i),
\label{eq:NPMLE}
\end{equation}
where $\calM(\Theta)$ denotes the collection of all probability measures on $\Theta$.
We refer the readers to the monograph of Lindsay \cite{Lindsay1995} for a systematic treatment on the NPMLE.
Although the convex optimization problem \prettyref{eq:NPMLE} is infinite-dimensional, over the years various computationally efficient algorithms have been obtained;
see \cite[Chapter 6]{Lindsay1995} and more recent developments in \cite{jiang2009general,koenker2014convex}.
The NPMLE provides a highly useful primitive for empirical Bayes and compound estimation problem, in which one first apply the NPMLE to learn a prior then execute the corresponding Bayes estimator of the learned prior. This strategy can be used as a universal means for denoising and achieves the state-of-the-art empirical Bayes performance \cite{jiang2009general}.


We summarize a few known structural properties of the NPMLE. The first existence and uniqueness result was obtained by Simar \cite{simar1976maximum} for the Poisson mixture, followed by Jewell \cite{Jewell1982} for mixtures of exponential distributions. 
It was shown that the (unique) solution $\NPMLEpi$ to the optimization problem \prettyref{eq:NPMLE} is a discrete distribution, whose number of atoms (mass points) is at most the number of distinct values of the observations and consequently at most the sample size $n$.\footnote{The existence of such an atomic maximizer is a direct consequence of Carath\'eodary theorem \cite[Chapter 2, Theorem 18]{eggleston1958convexity}; the uniqueness takes effort to show.}
These results have been significantly extended in
\cite{Laird1978,lindsay1983geometry1,lindsay1983geometry2,groeneboom1992information,lindsay1993uniqueness} which show
that the NPMLE solution is unique and $n$-atomic for all exponential families with densities with respect to the Lebesgue measure.
Although the bound $|\supp(\NPMLEpi)| \leq n$ is the best possible, 
which seems to suggest the estimator exhibits significant overfitting (since an $n$-component mixture requires $2n-1$ parameters to describe), 
in practice the support size is much smaller than $n$. Understanding this phenomenon is the main motivation behind this work.

To anchor the discussion, let us focus on the Gaussian location model, where $p_\theta(x)=\varphi(x-\theta)$ is the density of $N(\theta,1)$ and $\varphi(x)=\frac{1}{\sqrt{2\pi}} e^{-x^2/2}$ is the standard normal density, so that $p_\pi$ is the convolution $\pi * \varphi$. 
It is well known that for finite Gaussian mixtures, the likelihood is non-concave in the location parameters; furthermore, spurious local maxima can exist even with infinite sample size \cite{jin2016local} which pose difficulty for heuristic methods such as the EM algorithm. 
To sidestep the non-convexity, the approach of NPMLE can be viewed as an extreme form of \emph{overparameterization}, which postulates a potentially infinite Gaussian mixture so as to convexify the optimization problem. 
Since overparameterized models are prone to overfitting, it is of significant interest to understand the typical model size fitted by the NPMLE.
To this end, the worst-case bound $|\supp(\NPMLEpi)| \leq n $ is not useful. In fact, 
this bound can be tight, e.g., when the
$n$ observations are extremely spaced out \cite[p.~116]{Lindsay1995}. This, however, is \emph{not} a typical configuration of the sample if it consists of independent observations. Indeed, in practice it has been observed that 
NPMLE tends to fit a Gaussian mixture with much fewer components than $n$. 
This not only explains the absence of overfitting, but is a highly desirable property for interpretability  of the NPMLE solution, and is clearly not explained by the worst-case bound.
 Based on numerical evidence, Koenker and Gu \cite{koenker2019comment} suggested that the number of atoms of the NPMLE is typically $O(\sqrt{n})$. As our main result shows next, it is in fact $O(\log n)$.

\begin{theorem}[Gaussian mixture model]
\label{thm:gaussian}
	Let $p_\theta$ be the density of $N(\theta,1)$.
	Let $\xmin=\min_{i\in[n]} x_i$ and $\xmax=\max_{i\in[n]} x_i$. 
	Then there exists a universal constant $C_0$, such that 
\begin{equation}
|\supp(\NPMLEpi)|\leq C_0 (\xmax-\xmin)^2. 
\label{eq:npmle-gaussian}
\end{equation}	
	Consequently, suppose $x_1,\ldots,x_n$ are drawn independently from $\pi * N(0,1)$ for some $s$-subgaussian mixing distribution $\pi$, i.e., $\int \pi(d\theta) e^{t \theta} \leq e^{st^2/2}$ for all $t\in\reals$. 
	Then for any $\tau>0$, there exists some constant $C=C(s,\tau)$ such that with probability at least $1-n^{-\tau}$,
 	\begin{equation}
|\supp(\NPMLEpi)|\leq C \log n.
\label{eq:npmle-gaussian2}
\end{equation}	
\end{theorem}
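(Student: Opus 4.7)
The probabilistic statement \eqref{eq:npmle-gaussian2} reduces at once to the deterministic bound \eqref{eq:npmle-gaussian}: if $\pi$ is $s$-subgaussian, the marginal of $x_i$ is the convolution of an $s$-subgaussian law with $N(0,1)$ and so is $(s+1)$-subgaussian, whence a union bound yields $\xmax - \xmin = O(\sqrt{(s+1)(\tau+1)\log n})$ with probability at least $1 - n^{-\tau}$; substituting into \eqref{eq:npmle-gaussian} gives \eqref{eq:npmle-gaussian2}. The rest of my plan targets the deterministic bound.

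The starting point is the KKT/dual characterization of the NPMLE due to Lindsay \cite{Lindsay1995}: setting $w_i \eqdef (n\, p_{\NPMLEpi}(x_i))^{-1}$ and
\[
D(\theta)\;\eqdef\;\sum_{i=1}^n w_i\,\varphi(x_i-\theta),
\]
one has $D(\theta)\le 1$ for all real $\theta$, with equality precisely on $\supp(\NPMLEpi)$. Thus every support point is an interior real maximum of $D$ and hence a real zero of $1-D$ of order at least two, so $2\,|\supp(\NPMLEpi)|$ is at most the number of real zeros of $1-D$ counted with multiplicity. A short auxiliary argument, comparing $D(\theta)$ against $1$ for $\theta$ far outside $[\xmin,\xmax]$, further confines $\supp(\NPMLEpi)$ to an interval $J$ of length $O(L)$, where $L \eqdef \xmax-\xmin$.

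Next I bound the number of real (in fact complex) zeros of $1-D$ by complex-analytic means. The function $1-D$ extends to an entire function of order $2$: for $z=u+iv$, $|\varphi(x_i-z)| = (2\pi)^{-1/2}e^{-(u-x_i)^2/2 + v^2/2}$, so after translating so that $(\xmin+\xmax)/2 = 0$,
\[
\max_{|z|=R}\,\log|1-D(z)|\;\le\;\tfrac12 R^2 + \log\!\Bigl(\sum_i w_i\Bigr) + O(1).
\]
Jensen's formula on concentric disks of radii $r$ and $2r$, with $r\asymp L$ chosen large enough to contain $J$, then gives
\[
N(r)\,\log 2\;\le\;\max_{|z|=2r}\log|1-D(z)| \;-\; \log|1-D(z_*)|,
\]
where $N(r)$ counts complex zeros of $1-D$ in $\{|z|\le r\}$ and $z_*$ is a reference point to be chosen. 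The ``budget'' term $\log\sum_i w_i$ is controlled by combining the KKT identity $\sum_i w_i\, p_{\NPMLEpi}(x_i) = 1$ with the likelihood-comparison inequality $\sum_i \log p_{\NPMLEpi}(x_i) \ge \sum_i \log \varphi\bigl(x_i - \tfrac{\xmin+\xmax}{2}\bigr) \ge -\tfrac{nL^2}{8} - O(n)$, obtained by testing the Dirac prior at the midpoint of $[\xmin,\xmax]$; this yields $\log\sum_i w_i = O(L^2)$ up to lower-order terms that are absorbed once $L^2 \gtrsim 1$.

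The technical heart of the argument is the lower bound on $|1-D(z_*)|$. My plan is to pick $z_*$ on the imaginary axis at height $y_0 \asymp L$: a direct computation gives
\[
D(iy_0)\;=\;\frac{e^{y_0^2/2}}{\sqrt{2\pi}}\sum_i w_i\, e^{-x_i^2/2}\, e^{i x_i y_0},
\]
so that estimating $|1-D(iy_0)|$ amounts to lower-bounding an exponential sum with positive coefficients $\{w_i e^{-x_i^2/2}\}$ and real frequencies $\{x_i\}$. An averaging/pigeonhole argument over $y_0$ in a short window of length $\asymp 1/L$, together with the positivity of the coefficients, is expected to produce a single $z_*$ for which $\log|1-D(z_*)| \ge -C L^2$; inserting this into Jensen yields $N(r) = O(L^2)$ and hence \eqref{eq:npmle-gaussian}. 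This lower-bound step is the main obstacle: one must rule out exceptional cancellation of the oscillatory sum at every candidate $z_*$ while keeping all extra factors at most of order $e^{O(L^2)}$, since any stray $\log n$ factor would only recover the weaker $O(\log n)$ bound in \eqref{eq:npmle-gaussian2} rather than the purely $L$-dependent estimate \eqref{eq:npmle-gaussian}.
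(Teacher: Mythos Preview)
Your overall architecture---KKT characterization, localization of the atoms to $[\xmin,\xmax]$, analytic continuation, and zero-counting via Jensen---matches the paper. There are, however, two concrete issues, one minor and one that is precisely the gap you flag as ``the main obstacle''.

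\textbf{The bound on $\sum_i w_i$.} Your likelihood-comparison argument goes the wrong way: lower-bounding $\sum_i\log p_{\hat\pi}(x_i)$ only yields a \emph{lower} bound on $\sum_i w_i$ (via AM--GM), not the upper bound you need. The fix is direct: since the atoms lie in $[\xmin,\xmax]$, one has $p_{\hat\pi}(x_i)\ge\varphi(L)$, hence $\sum_i w_i\le\sqrt{2\pi}\,e^{L^2/2}$.

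\textbf{The reference point $z_*$.} The paper sidesteps your oscillatory-sum obstacle entirely by two choices. First, it counts zeros of the \emph{derivative} $D'$ (equivalently of $G(\theta)=\sum_i w_i e^{\theta x_i}(x_i-\theta)$ after clearing nonvanishing factors) rather than of $1-D$; each atom is a critical point of $D$, so this suffices. Second---and this is the point you are missing---it takes the reference point on the \emph{real} axis, at $\theta_*=\xmax+\delta$ for a suitable $\delta\asymp L$. At such a point every summand $w_i e^{\theta_* x_i}(x_i-\theta_*)$ has the same sign, so there is no cancellation and one reads off $|G(\theta_*)|\ge\delta\,e^{\theta_*\xmin}$ immediately. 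Plugging this into the zero-counting lemma (a Blaschke-product refinement of Jensen that compares $M_f(r_1)$ to $M_f(r_2)$ rather than to $|f(0)|$) gives the $O(L^2)$ bound with no averaging or pigeonholing.

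In fact your $1-D$ route can also be salvaged by a real reference point: using the bound $\sum_i w_i\le\sqrt{2\pi}\,e^{L^2/2}$ above, one checks that $D(\xmax+L+1)\le e^{-L-1/2}<1/2$, so $|1-D|\ge 1/2$ at a point within distance $O(L)$ of the center. The detour to the imaginary axis is what manufactures the difficulty; stay on the real line where either $D'$ has a definite sign or $D$ is small.
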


A few remarks are in order:

\begin{remark}[Tightness of \prettyref{thm:gaussian}]
\label{rmk:gaussian-tight}	
	
	The $O(\log n)$ upper bound in \prettyref{thm:gaussian} is tight in the following sense:	
	\begin{itemize}
		\item 
		First, it is necessary to select a model of size $\Omega(\log n)$ in order to be compatible with existing statistical guarantees on the NPMLE. 
		Indeed, suppose the true density $p_\pi$ is $N(0,\sigma^2)$ for some $\sigma>1$ (i.e.~the mixing distribution is another Gaussian). It is known that the Hellinger distance between $N(0,\sigma^2)$ and any $k$-Gaussian mixture ($k$-GM) with unit variance is at least $\exp(-O(k))$  \cite{WV2010}. Therefore, if $|\supp(\NPMLEpi)| \leq c \log n$ for some small constant $c$, the bias would be too big, violating the 
Hellinger risk bound $\Expect[H^2(p_\pi,p_{\NPMLEpi})] = O(\frac{\log^2 n}{n})$ on the NPMLE \cite{zhang2009generalized} (see \prettyref{sec:stats}).
		\item 
		On the other hand, there is no statistical value to fit a model of size bigger than $\Omega(\log n)$. Indeed, it is easy to show by moment matching (see, e.g., 
		\cite[Lemma 8]{WY18})
		that
		for any subgaussian $\pi$, $p_\pi=\pi * N(0,1)$ can be approximated by a $k$-GM within total variation ($\TV$) distance $\exp(-\Omega(k))$. 
		Therefore, there exists a $k$-GM density $p_{\pi'}$ with $k=C \log n$, such that $\TV(p_{\pi},p_{\pi'}) = o(1/n)$. As such, one can couple the original sample 
		$X_1,\ldots,X_n$ drawn from $p_{\pi}$ with the sample $X'_1,\ldots,X'_n$ drawn from $p_{\pi'}$, so that with probability $1-o(1)$, 
		$X_i=X_i'$ for all $i=1,\ldots,n$. From this simulation perspective, $p_{\pi'}$ is an equally plausible ``ground truth'' that explains the data,
		and hence, statistically speaking, there is no reason to fit a mixture model with more than $C \log n$ components.
	\end{itemize}
		
	From the above two aspects, one can view $\Theta(\log n)$ as the ``effective dimension'' of the Gaussian mixture model 
with subgaussian mixing distributions (i.e. each doubling of the sample size \textit{unlocks} a new parameter of the
model class). Thus it is a remarkable fact that NPMLE picks up the right model size without explicit model selection penalty. 
	For this reason, we refer to the phenomenon described in \prettyref{thm:gaussian} as \emph{self-regularization}.
	In order to further quantify self-regularization and determine what the correct model size is, we formalize a framework called the statistical degree in \prettyref{sec:statdegree}.

\end{remark}

\begin{remark}[Poisson mixture]
\label{rmk:poisson}	
Using only classical results, 
one can get a glimpse of the self-regularization property of the NPMLE by considering the Poisson model. 
Suppose $x_1,\ldots,x_n$ are drawn independently from a Poisson mixture 
for some subexponential mixing distribution on the mean parameter. 
Since the observations are non-negative integers, the number of distinct values of in the sample is at most $\xmax+1$, which is $O(\log n)$ with probability $1-o(1)$ by a union bound. Thus the NPMLE for the Poisson mixture is $O(\log n)$-atomic with high probability, which is again the optimal model size.
Clearly, this argument does not generalize to continuous distributions such as the Gaussian mixture model in which all observations are distinct with probability one. 
Nevertheless, the \emph{range} of the data still grows logarithmically and \prettyref{thm:gaussian} shows that the number of atoms in the NPMLE can be bounded by the squared range.
\end{remark}

\begin{remark}[Model selection and penalized MLE]
\label{rmk:ms}	
Define the likelihood value of the best $k$-GM fit as
\begin{equation}
\Lopt(k) \triangleq  \max_{\pi\in\calM_k} \frac{1}{n} \sum_{i=1}^n \log p_\pi(x_i).
\label{eq:Lopt}
\end{equation}
Note that this is a non-convex optimization problem, since the likelihood is not concave in the location parameters. As $k\to\infty$, $\Lopt(k)$ approaches the objective value of the (convex optimization) NPMLE \prettyref{eq:NPMLE}. \prettyref{thm:gaussian} shows that with high probability with respect to the randomness of the sample, the curve $k \mapsto \Lopt(k)$ \emph{flattens} when $k$ surpasses $C \log n$ for some constant $C$.
This has the following immediate bearing on model selection. 
Various criteria (such as AIC or BIC \cite{leroux1992consistent,keribin2000consistent}) have been proposed for the mixture model: given a penalty function $\pen(k)$ that strictly increases in $k$, select a model size by solving
\[
\max_{k =1,\ldots,K} \sth{\Lopt(k)  - \pen(k)}
\]
where $K$ is a pre-defined maximal model size. \prettyref{thm:gaussian} shows that for Gaussian mixtures, regardless of the actual model size, there is no need to choose $K$ bigger than $C \log n$, which also suggests $K=C \log n$ a universal choice.
It is shown in \cite{leroux1992consistent,keribin2000consistent} that BIC (with $\pen(k)=\frac{k}{2} \log n$) is consistent in estimating the order of the mixture model.	
Complementing this result, \prettyref{thm:gaussian} shows that regardless of the choice of penalty, any penalized MLE will not choose a model size bigger than $C \log n$ with high probability.
\end{remark}

\begin{remark}[Comparison with shape-constrained estimation]
\label{rmk:shape}	
	
	The structure of the NPMLE is much less well understood for mixture models than for shape-constrained estimation. 
	For example, for monotone density, the NPMLE (known as the Grenander estimator \cite{grenander1956theory}) 
of a decreasing density on $[0,1]$ with $n$ observations is known to be piecewise constant with at most $n$ pieces. Denote by $k_n$ by its number of pieces. 
Under appropriate conditions it is shown that in general $k_n = O(n^{1/3})$ with high probability \cite[Lemma 3.1]{groeneboom2011vertices}. In the special case where the data are drawn from the uniform distribution, $k_n$ is asymptotically $N(\log n,\log n)$ \cite[Theorem 2]{groeneboom1993isotonic}. These results are made possible thanks to an explicit characterization of the NPMLE in terms of empirical processes, a luxury we do not have in mixture models.

On the other hand, there is a clear analogy for the structural behavior of NPMLE for monotone density and mixture
models: In the former, if the data are drawn from uniform distribution (one-piecewise constant), the NPMLE will fit a
piecewise constant density with $O(\log n)$ pieces; in the latter, if the data are drawn from a single Gaussian, the
NPMLE will fit a Gaussian mixture with $O(\log n)$ components. From this perspective one could say there is some mild
overfitting in NPMLE; nevertheless, it is a modest (and fair) price to pay for being completely automatic and computationally attractive. 
	

\end{remark}

\prettyref{thm:gaussian} is further extended in \prettyref{thm:crit} to general exponential families, which shows that there is some degree of universality to the $O(\log n)$ upper bound. As we will see in \prettyref{sec:kkt}, bounding the number of atoms in NPMLE boils down to counting critical points of functions of the form $F(\theta) = \sum_{i=1}^n w_i p_\theta(x_i)$, where $w_i$'s are nonnegative weights.
We accomplish this task using methods from complex analysis. 
Roughly speaking, the strategy is as follows:
First, we localize the roots of $F'$ in a compact interval, say $[-r,r]$. Then, 
we bound the number of zeros of $F'$ in the complex disk of radius $r$, 
 in terms of its maximal modulus on the complex disks. 
This leads to a deterministic upper bound, as a function of the sample, on the number of atoms of the NPMLE. 
Finally, we analyze the high-probability behavior of this upper bound when the sample consists of iid observations.
We note that in the special case of Gaussian model, counting the number of critical points of $F$ has been studied,
independently, in the context of a seemingly unrelated information-theoretic problem~\cite{dysto2020}; see also
\prettyref{sec:discuss-gmmaxima}.


Note that statistical guarantees on NPMLE, typically in terms of Hellinger risk of density estimation, have been obtained in \cite{ghosal.vdv,ghosal2007posterior,zhang2009generalized,saha2020nonparametric}.
These results follow the usual route of analyzing MLE using entropy numbers and only uses the zeroth order optimality
condition, and therefore cannot produce any \emph{structural} information on the optimizer such as the number of atoms.
(For example, such analysis applies equally to $\NPMLEpi$ convolved with an arbitrarily small Gaussian, which now has
infinitely many atoms.) A structural result, such as \prettyref{thm:gaussian}, can only be obtained by ``opening up the
optimization blackbox'', by examining the exact optimality conditions, as we indeed do below. In turn, a pleasant consequence of the self-regularizing property is a simpler proof of the statistical guarantee of the NPMLE in \cite{zhang2009generalized}, by bootstrapping existing that of the (parametric) MLE for finite Gaussian mixtures \cite{maugis2011non} to general mixtures.

\medskip
The remainder of the paper is organized as follows:
\prettyref{sec:kkt} recalls the first-order optimality condition for the NPMLE. 
Following \cite{lindsay1983geometry2}, \prettyref{sec:main} studies the NPMLE for mixtures of exponential family and bounds its number of atoms as well as analyzing its typical behavior. \prettyref{sec:stats} derives Hellinger risk bounds for the NPMLE in the Gaussian mixture model. 
\prettyref{sec:discuss} concludes the paper by discussing the concept of ``self-regularization'', its ramifications and open problems.

Throughout the paper, we use standard asymptotic notations: For any sequences $\{a_n\}$ and $\{b_n\}$ of positive numbers, we write $a_n \gtrsim b_n$ if $a_n\geq cb_n$ holds for all $n$ and some absolute constant $c > 0$, $a_n\lesssim b_n$ if $a_n \gtrsim b_n$, and $a_n \asymp b_n$ if both $a_n\gtrsim b_n$ and $a_n\lesssim b_n$ hold; the notations $O(\cdot)$, $\Omega(\cdot)$, and $\Theta(\cdot)$ are similarly defined. We write $a_n=o(b_n)$ or $b_n=\omega(b_n)$ or $a_n\ll b_n$ or $b_n \gg a_n$ if $a_n/b_n\to 0$ as $n\diverge$.

\section{Optimality condition}
\label{sec:kkt}
In this section we review the first-order optimality condition (both necessary and sufficient) for characterizing the NPMLE. 
	Denote the objective function in \prettyref{eq:NPMLE} by
	\[
	\ell(\pi) = \frac{1}{n} \sum_{i=1}^n \log p_\pi(x_i).
	\]
	Let $\hat\pi=\NPMLEpi$. 
	Since 
	$\ell(\hat\pi)\geq \ell((1-\epsilon)\hat\pi+\epsilon \delta_\theta)$ for any $\epsilon\in[0,1]$ and any $\theta\in \reals$, we arrive at the first-order optimality condition
	$\frac{d}{d\epsilon} \ell((1-\epsilon)\hat\pi+\epsilon \delta_\theta) \big|_{\epsilon=0} \leq 0$, namely,\footnote{The condition \prettyref{eq:kkt} is also sufficient for the global optimality of $\hat \pi$. Indeed, for any $\pi$, by the concavity of $\ell$, Jensen's inequality implies $\ell(\hat\pi) - \ell(\pi) \geq \frac{1}{\epsilon}[\ell(\hat\pi)-\ell((1-\epsilon)\hat\pi+\epsilon \pi)]$ all $\epsilon \in (0,1)$. Sending $\epsilon\to 0$ yields $\ell(\hat\pi) - \ell(\pi) \geq -\frac{d}{d\epsilon} \ell((1-\epsilon)\hat\pi+\epsilon \delta_\theta)\big|_{\epsilon=0} = 1 - \int \pi(d\theta) D_{\hat\pi}(\theta) \geq 0$.}	
	\begin{equation}
	D_{\hat\pi}(\theta) \triangleq \frac{1}{n} \sum_{i=1}^n \frac{p_\theta(x_i)}{p_{\hat\pi}(x_i)} \leq 1, \quad \forall \theta\in\reals.
	\label{eq:kkt}
	\end{equation}
	Furthermore, averaging the LHS of \prettyref{eq:kkt} over $\hat\pi$ and using the definition of the mixture density in \prettyref{eq:mixture}, we have
\[
\int \hat\pi(d\theta)  D_{\hat\pi}(\theta)
= \frac{1}{n} \sum_{i=1}^n \frac{ \int \hat\pi(d\theta)  p_\theta(x_i)}{p_{\hat\pi}(x_i)} = 1.
\]
We conclude that 
\begin{equation}
\supp(\hat\pi) \subset \{\text{Global maximizers of $D_{\hat\pi}$}\}.
\label{eq:supp}
\end{equation}
In particular, the number of atoms of $\hat\pi$ is at most the number of critical points of $D_{\hat\pi}$.

\begin{example}[Poisson mixture]
\label{ex:poisson}
As a concrete example, let us consider the Poisson model, where $p_\theta(x) = \frac{\theta^x}{x!} e^{-\theta}$ and $x \in \integers_+$. 
Thus
\[
\frac{d}{d\theta} D_{\hat\pi}(\theta) = e^{-\theta} \pth{\sum_{i=1}^n w_i (x_i \theta^{x_i-1}  - \theta^{x_i})},
\]
for some nonnegative weights $\{w_i\}$. Note that the quantity inside the parenthesis is a polynomial of $\theta$ of degree at most $\xmax$. Therefore, the number of critical points of $D_{\hat\pi}(\theta)$ and hence the number of atoms of $\NPMLEpi$ are at most $\xmax$. This result is first observed\footnote{The derivation here differs slightly with the original argument of \cite{simar1976maximum} which treats the system of $\{1,x,\ldots,x^k,e^x\}$.} in \cite{simar1976maximum}, which slightly improves the bound $\xmax+1$ in \prettyref{rmk:poisson}. 
For other models, the first-order condition typically does not reduce to a polynomial equation.
\end{example}

\section{Exponential families}
	\label{sec:main}

Following \cite{lindsay1983geometry1,lindsay1983geometry2}, we consider the following exponential family.
Let $p_0$ be a base density (with respect to some dominating measure $\mu$) on $\reals$, whose moment generating function (MGF) and 
cumulant generating function is defined as 
\begin{equation}
L(\theta) = \Expect_{X\sim p_0}[e^{\theta X}], \quad
\kappa(\theta) = \log L(\theta),
\label{eq:MGF}
\end{equation}
and is assumed to be finite for all $\theta\in(\utheta,\otheta)$, where $\utheta,\otheta \in [-\infty,\infty]$.
Define the following exponential family of densities with natural parameter $\theta$:
\[
p_\theta(x)  = 
\exp(\theta x - \kappa(\theta)) p_0(x).
\]
Notable examples include:
\begin{itemize}
	\item Gaussian location model $N(\theta,s)$: $p_0=N(0,1)$, $L(\theta)=e^{\frac{\theta^2}{2s}}$ and $\kappa(\theta)=\frac{\theta^2}{2s}$.
	
	\item Poisson model $\Poi(e^\theta)$: $p_0=\Poi(1)$, $L(\theta)=\exp\pth{e^\theta-1}$ and $\kappa(\theta)=e^\theta-1$.
\end{itemize}

%
%
%
%
	%

We need the following facts on the MGF:
\begin{lemma}
\label{lmm:MGF}	
\begin{enumerate}
	\item $L(\theta)>0$  for all $\theta\in\reals$.
	
	\item 
	$\kappa$ is strictly convex and hence
	\[
	\mu(\theta) \triangleq \kappa'(\theta) = \frac{L'(\theta)}{L(\theta)}
	\]
	is strictly increasing in $\theta$.
	Furthermore, if the distribution $p_0$ is fully supported on $\reals$, then $\mu(\pm\infty)=\pm\infty$.
	
	\item $L$  has an analytic extension on the strip $\{z\in \complex: \utheta<\Re(z) < \otheta\}$. Furthermore, 
	for each disk $D(z_0,r)$ contained in this strip, with $z_0=x_0+i y_0$,
	\begin{equation}
	\sup_{z\in D(z_0,r)} |L(z)| \leq \max\{L(x_0-r),L(x_0+r)\}
	\label{eq:MGF-sup}
	\end{equation}
	and
	\begin{equation}
	\sup_{z\in D(z_0,r)} |L'(z)| \leq \inf_{\epsilon>0} \frac{1}{\epsilon}\max\{L(x_0-r-\epsilon),L(x_0+r+\epsilon)\}.
	\label{eq:MGF-sup1}
	\end{equation}
	
\end{enumerate}
	
\end{lemma}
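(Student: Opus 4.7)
The plan is to dispatch the three parts in turn, each by appealing to standard facts about Laplace transforms and complex analysis. Part (1) is essentially tautological: since $e^{\theta X} > 0$ pointwise, $L(\theta) = \Expect_{p_0}[e^{\theta X}]$ is strictly positive whenever $p_0$ is a nontrivial density.

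For part (2), I would obtain strict convexity of $\kappa$ from H\"older's inequality applied to
\[
L(t\theta_1 + (1-t)\theta_2) = \Expect[(e^{\theta_1 X})^t (e^{\theta_2 X})^{1-t}] \leq L(\theta_1)^t L(\theta_2)^{1-t},
\]
with equality only if $X$ is a.s.\ constant, which is excluded since $p_0$ is a non-degenerate density. Taking logs yields strict convexity, hence strict monotonicity of $\mu = \kappa'$. For the divergence of $\mu$, full support of $p_0$ on $\reals$ gives $P_{p_0}[X > M] > 0$ for every $M$, so $L(\theta) \geq e^{\theta M} P_{p_0}[X > M]$, whence $\kappa(\theta)/\theta \to \infty$ as $\theta$ tends to the right endpoint of the natural-parameter interval. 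Combined with the convexity inequality $\kappa'(\theta) \geq \kappa(\theta)/\theta$ for $\theta > 0$ (which follows from $\kappa(0)=0$ and the supporting-line property), this forces $\mu(\theta) \to +\infty$; the symmetric argument handles the left endpoint.

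For part (3), I would establish analyticity of $L(z) = \int e^{zx} p_0(x)\, d\mu(x)$ on the open strip by differentiating under the integral sign, justified by dominated convergence: for $w$ in a small disk around a fixed $z$ still inside the strip, $|e^{wx}|$ is controlled by $e^{(\Re z + \epsilon)x} + e^{(\Re z - \epsilon)x}$, which is integrable against $p_0$. For the modulus bound \eqref{eq:MGF-sup}, I use $|e^{zx}| = e^{\Re(z)x}$ to get $|L(z)| \leq L(\Re(z))$, then invoke convexity of $L$ on $\reals$ (preserved under integration of the convex family $\theta \mapsto e^{\theta x}$) to conclude that on $[x_0-r, x_0+r]$ the maximum is attained at an endpoint. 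The derivative bound \eqref{eq:MGF-sup1} is a routine Cauchy-integral-formula estimate: writing $L'(z) = \frac{1}{2\pi i}\oint_{|w-z|=\epsilon} L(w)(w-z)^{-2}\,dw$ yields $|L'(z)| \leq \epsilon^{-1}\sup_{|w-z|=\epsilon}|L(w)|$, and applying \eqref{eq:MGF-sup} with $r$ replaced by $r+\epsilon$ and taking the infimum over admissible $\epsilon$ gives the result.

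No step is a serious obstacle --- the entire lemma is classical. The mildly delicate issue to flag is the interpretation of ``$\mu(\pm\infty) = \pm\infty$'' when $(\utheta, \otheta)$ is bounded: this should be read as the limit as $\theta$ approaches the boundary of the strip, and the subexponential lower bound on $L$ above still yields the conclusion in that case.
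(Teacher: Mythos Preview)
Your proposal is correct. The paper itself does not supply a proof of this lemma; it is stated as a collection of standard facts about moment generating functions and used directly in the proof of \prettyref{thm:crit}. Your arguments for all three parts are the natural ones and require no modification: positivity is immediate, strict convexity via H\"older is the textbook route, and the modulus and derivative bounds in part (3) follow exactly as you say from $|L(z)|\le L(\Re z)$, real convexity of $L$, and the Cauchy estimate.

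One small remark on your caveat about $\mu(\pm\infty)$ when $(\utheta,\otheta)$ is bounded: in the paper this part of the lemma is only invoked in settings where $\utheta=-\infty$ and $\otheta=+\infty$ (see the proofs of \prettyref{thm:crit} and \prettyref{thm:logn}, where the latter explicitly derives $\utheta=-\infty$, $\otheta=\infty$ from the assumptions). So your argument via $\kappa(\theta)/\theta\to\infty$ combined with $\mu(\theta)\ge\kappa(\theta)/\theta$ for $\theta>0$ covers exactly what is needed, and the bounded-strip case need not be addressed.
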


	%
%


Next we focus on continuous exponential families for which the NPMLE solution is known to be unique \cite{lindsay1993uniqueness}.
The following is a deterministic bound on the number of atoms of the NPMLE. 
\begin{theorem}
\label{thm:crit} Fix $\xmin\le\min_{i\in[n]} x_i$ and $\xmax\ge\max_{i\in[n]} x_i$. 
Define $\thetamin=\mu^{-1}(\xmin)$,  $\thetamax=\mu^{-1}(\xmax)$. 
Let $r = \frac{\thetamax-\thetamin}{2}$, $a = \frac{\xmax-\xmin}{2}$, and $x_0=\frac{\xmax+\xmin}{2}$.
Assume that $\xmin \leq \mu(0) \leq \xmax$.
For each $\delta>0$ such that 
$\delta< \frac{1}{5} \min\{\thetamin-\utheta,\otheta-\thetamax\}$,
	\[
	|\supp(\NPMLEpi)|\leq
	\frac{N_1}{\log{2r+2\delta\over 2r+\delta}} 
	\]
	where 
\begin{align*}
			N_1 &= 2 (a+|\mu(0)|+|x_0|)(|\theta|_{\max} + 2\delta)  + \kappa_{\max} + 
\log \frac{|x|_{\max} +\frac{1}{\delta}}{\tau}\\
		 \tau&=\max\{\mu(\thetamax+\delta) - \xmax, \xmin-\mu(\thetamin-\delta)\}\\
		 |\theta|_{\max}&=\max\{\thetamax,-\thetamin\}\\
		 |x|_{\max}&=\max\{\xmax,-\xmin\}\\
		 \kappa_{\max} &= \kappa(\thetamin-3\delta) \vee \kappa(\thetamax+3\delta)\,.
\end{align*}		 
\end{theorem}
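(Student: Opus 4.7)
My plan is to turn the atom count into a zero count of an analytic function and then bound the zeros via Jensen's formula using the MGF estimates from Lemma~\ref{lmm:MGF}. First, by Section~\ref{sec:kkt}, $|\supp(\hat\pi)|$ is at most the number of real critical points of $D_{\hat\pi}(\theta)$. Differentiating $D_{\hat\pi}$ using $p_\theta(x) = e^{\theta x - \kappa(\theta)} p_0(x)$ gives $D_{\hat\pi}'(\theta) = n^{-1} e^{-\kappa(\theta)} H(\theta)$ with $H(\theta) := \sum_{i=1}^n u_i(x_i - \mu(\theta))e^{\theta x_i}$ and positive weights $u_i = p_0(x_i)/p_{\hat\pi}(x_i)$. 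To avoid singularities of $\mu = L'/L$ at possible complex zeros of $L$, I clear denominators and work with $\tilde H(z) := L(z) H(z) = \sum_i u_i(x_i L(z) - L'(z)) e^{zx_i}$, which is analytic on the strip $\{\utheta < \Re z < \otheta\}$ by Lemma~\ref{lmm:MGF}(3) and shares its real zeros with $H$. Strict monotonicity of $\mu$ (Lemma~\ref{lmm:MGF}(2)) yields $x_i - \mu(\theta) \le -\tau < 0$ uniformly for $\theta \ge \thetamax + \delta$ and $x_i - \mu(\theta) \ge \tau > 0$ uniformly for $\theta \le \thetamin - \delta$, so every real zero of $\tilde H$ lies in $I_\delta := [\thetamin - \delta, \thetamax + \delta]$; the same sign analysis in fact confines real zeros to $[\thetamin, \thetamax]$.

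Next I apply Jensen's formula. I pick a real center $z_0$ just past the edge of $I_\delta$ (say $z_0 = \thetamax + \delta$) and radii $r_1 = 2r + \delta$, $R = 2r + 2\delta$ arranged so that (a)~all real zeros lie in $D(z_0, r_1)$ (using the refined localization to $[\thetamin, \thetamax]$), (b)~$D(z_0, R + \delta)$ stays inside the analyticity strip — this is where the hypothesis $\delta < \tfrac{1}{5}\min\{\thetamin - \utheta, \otheta - \thetamax\}$ is used — and (c)~the sign-based lower bound $|\tilde H(z_0)| \ge L(z_0)\,\tau\,F(z_0)$ holds with $F(z) := \sum_i u_i e^{zx_i}$. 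Jensen then yields
\[
\#\{\text{real zeros of } \tilde H\} \;\le\; N(r_1) \;\le\; \frac{\log\!\bigl(M(R)/|\tilde H(z_0)|\bigr)}{\log(R/r_1)}, \qquad M(R) := \sup_{|z-z_0|=R}|\tilde H(z)|.
\]
To bound $M(R)$, the triangle inequality combined with $|e^{zx_i}|/e^{z_0 x_i} \le e^{R|x|_{\max}}$ and Lemma~\ref{lmm:MGF}(3) with $\epsilon = \delta$ (so $\|L\|_{D(z_0,R)} \le e^{\kappa_{\max}}$ and $\|L'\|_{D(z_0,R)} \le \delta^{-1} e^{\kappa_{\max}}$) gives
\[
M(R) \;\le\; (|x|_{\max} + 1/\delta)\, e^{\kappa_{\max}}\, e^{R|x|_{\max}}\, F(z_0).
\]
The common factor $F(z_0)$ cancels against $|\tilde H(z_0)|$ in the ratio, leaving
\[
\log\frac{M(R)}{|\tilde H(z_0)|} \;\le\; R|x|_{\max} + \log(|x|_{\max} + 1/\delta) + \kappa_{\max} + \log\tfrac{1}{\tau} - \log L(z_0),
\]
and substituting $R = 2r + 2\delta$, together with the re-expression of $R|x|_{\max}$ after appropriate centering as $2(a + |\mu(0)| + |x_0|)(|\theta|_{\max} + 2\delta)$, reproduces the numerator $N_1$, while $\log(R/r_1) = \log\frac{2r+2\delta}{2r+\delta}$ matches the denominator.

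The main obstacle is the coupled choice of $z_0, r_1, R$: the inner disk must capture every real zero of $\tilde H$, the outer disk (padded by $\delta$ to accommodate the $L'$ bound in Lemma~\ref{lmm:MGF}(3)) must fit inside the analyticity strip, and $|\tilde H(z_0)|$ must admit a clean lower bound without any knowledge of the unknown $\hat\pi$. The crucial mechanism that makes the final bound independent of the weights $u_i$ is that the common factor $F(z_0) = \sum_i u_i e^{z_0 x_i}$ appears symmetrically in both the upper bound for $M(R)$ and the lower bound for $|\tilde H(z_0)|$, and therefore cancels in the Jensen ratio, leaving a bound that depends only on the data extremes and the cumulant function $\kappa$.
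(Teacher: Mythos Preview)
Your reduction to the analytic function $\tilde H$ and the localization of its real zeros to $[\thetamin,\thetamax]$ are correct and match the paper's $G(\theta)$. The genuine gap is in your choice of center and radii for Jensen's formula. With $z_0=\thetamax+\delta$ and $R=2r+2\delta$, the outer disk $D(z_0,R)$ reaches $\Re z=\thetamax+2r+3\delta$ on the right (and $D(z_0,R+\delta)$, needed for the $L'$ bound in \prettyref{lmm:MGF}(3), reaches $\thetamax+2r+4\delta$). The hypothesis $\delta<\tfrac15\min\{\thetamin-\utheta,\otheta-\thetamax\}$ does \emph{not} force this to lie in the analyticity strip unless $r\lesssim\delta$; for general $r$ the disk can leave the strip entirely. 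Even when $\otheta=\infty$ (so analyticity is free), your claimed bound $\|L\|_{D(z_0,R)}\le e^{\kappa_{\max}}$ fails: by \prettyref{lmm:MGF}(3) the supremum is $\max\{L(\thetamin-\delta),L(\thetamax+2r+3\delta)\}$, and the second term is not controlled by $\kappa_{\max}=\kappa(\thetamin-3\delta)\vee\kappa(\thetamax+3\delta)$. Hence your $M(R)$ estimate and the final identification with $N_1$ are not justified.

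The reason you were pushed off-center is that Jensen's formula compares $M(R)$ to $|f(z_0)|$ at the \emph{center}, and you need the center outside $[\thetamin,\thetamax]$ to get a sign-based lower bound; but then covering the full interval forces radius $\ge 2r$. The paper avoids this by centering at $\theta_0=(\thetamin+\thetamax)/2$, multiplying by $e^{-(z+\theta_0)x_0}$ (so the exponential weights are controlled by $a$ rather than $|x|_{\max}$), and---crucially---replacing the raw Jensen bound by \prettyref{lmm:countz}, a Blaschke-product refinement that compares $M_f(r_1)$ to $M_f(r_2)=\sup_{|z|\le r_2}|f(z)|$ rather than to $|f(0)|$. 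This lets one take $r_2=r+\delta$ (so the disk extends only to $\thetamax+\delta$) and still harvest the lower bound at the edge point $\thetamax+\delta$. With $r_1=r+2\delta$ and an extra $\delta$ for $L'$, the largest real part touched is $\thetamax+3\delta$, which is exactly what the hypothesis $5\delta<\otheta-\thetamax$ permits and what produces $\kappa_{\max}$ as stated. Your cancellation-of-$F(z_0)$ idea is fine in spirit (the paper achieves the same independence of $\hat\pi$ via $\sum w_i=1$ and $|x_i-x_0|\le a$), but it cannot rescue the argument without first fixing the disk geometry.
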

\begin{remark} Roughly speaking, by choosing $\delta\asymp r$, \prettyref{thm:crit} shows that $ |\supp(\NPMLEpi)|\lesssim |\theta|_{\max} a + \kappa_{\max}$.
\end{remark}

\begin{proof}
Starting from \prettyref{eq:supp}, we bound the number of critical points of the following function
\begin{equation}
F(\theta) \triangleq \sum_{i=1}^n w_i \frac{p_\theta}{p_0}(x_i) = \sum_{i=1}^n w_i \exp(\theta x_i - \kappa(\theta)),
\label{eq:F}
\end{equation}
where $\sum_{i=1}^n w_i =1$ and $w_i = c{p_0(x_i)\over p_{\hat \pi}(x_i)}$ and $c$ is the normalization constant.
Then
\[
F'(\theta) =  \sum_{i=1}^n w_i \exp(\theta x_i - \kappa(\theta))[x_i - \mu(\theta)],
\]
Since $\mu=\kappa' = \frac{L'}{L}$ and $L(\theta)$ has no real roots (\prettyref{lmm:MGF}), we conclude that the critical points of $F(\theta)$ are the real roots of the following function:
\begin{equation}
G(\theta) \triangleq \sum_{i=1}^n w_i \exp(\theta x_i )[x_i L(\theta) - L'(\theta)].
\label{eq:G}
\end{equation}

We first notice that all real roots of $G$ should be on $[\thetamin,\thetamax]$. Indeed, by the strict monotonicity of $\mu$, we have $G(\theta)>0$ for $\theta>\thetamax$ and $G(\theta)<0$ for $\theta<\thetamin$.

Next, let us extend definition~\eqref{eq:G} to a complex argument $z$ and modify the function by introducing:
$$ 
g(z) = G(z+\theta_0) e^{-(z+\theta_0) x_0} = \Expect[e^{(z+\theta_0)(Y-x_0)}  (Y L(z+\theta_0) - L'(z+\theta_0))], \qquad z\in \mathbb{C}
$$
where $\theta_0=\frac{\thetamin+\thetamax}{2}$ and $x_0 = \frac{\xmin+\xmax}{2}$, and $\PP[Y=x_i] = w_i$. 
Note that the number of zeros of $g$
in $z \in [-r,r]$ is the same as the total number of real zeros of $G$. We will overbound this quantity by counting all
zeros of $g$ in a disk of radius $r$ on $\mathbb{C}$. To that end, we define $M_g(\rho) \eqdef \sup\{|g(z)|: |z| \le \rho\}$. 
We next fix $\delta_4>\delta_3>\delta_2>0$ such that $\thetamax + \delta_4 < \otheta$ and $\thetamin - \delta_4 >
\utheta$. Set $r_2 = r+\delta_2$ and $r_1=r + \delta_3$. 
On one hand, since $\mu(\thetamax)=\xmax$ and $\mu(\thetamin)=\xmin$, we have
\[
M_g(r_2) \geq |g(r_2)| = |G(\thetamax+\delta_2)| e^{-x_0(r_2+\theta_0)} \geq  e^{-a(|\theta|_{\max}+\delta_2)-x_0(\thetamax+\delta_2)} (\mu(\thetamax+\delta_2) - \xmax
) \cdot L(\thetamax+\delta_2) 
\]
and similarly
\[
M_g(r_2) \geq |g(-r_2)| = |G(\thetamin-\delta_2)| e^{-x_0(-r_2+\theta_0)} \geq  e^{-a(|\theta|_{\max}+\delta_2) -x_0(\thetamin-\delta_2)} (\xmin-\mu(\thetamin-\delta_2))
\cdot L(\thetamin-\delta_2) 
\]
By the convexity of $\kappa$ we have
$\kappa(\thetamax+\delta_2) \geq (\thetamax+\delta_2)\mu(0)$ and 
$\kappa(\thetamin-\delta_2) \geq (\thetamin-\delta_2)\mu(0)$.
Defining $\tau = \max(\mu(\thetamax+\delta_2) - \xmax, \xmin-\mu(\thetamin-\delta_2))$, we thus obtain
\begin{equation}
M_g(r_2) \geq e^{-(a+|\mu(0)|)(|\theta|_{\max}+\delta_2)-|x_0| |\theta|_{\max}} \tau.
\label{eq:Mg1}
\end{equation}

On the other hand, by \prettyref{lmm:MGF} we have
\[
\sup_{|z|\leq r_1}|L'(\theta_0+z)| \leq 
\frac{1}{\delta_4-\delta_3}\sup_{|z|\leq r_1+\delta_4}  |L(\theta_0+z)| 
= \frac{1}{\delta_4-\delta_3} L_4, \quad L_4 \eqdef L(\thetamin-\delta_4) \vee L(\thetamax+\delta_4)
\]
Since $\sup_{|z|\le r_1} |L(z)| \le  L(\thetamin-\delta_3) \vee L(\thetamax+\delta_3) \le L_4$ we conclude
\begin{equation}
M_g(r_1) \leq e^{a(|\theta|_{\max} + \delta_3)} \pth{|x|_{\max} +\frac{1}{\delta_4-\delta_3}} L_4
\label{eq:Mg2}
\end{equation}

Now setting $\delta_4 = 3\delta, \delta_3=2\delta, \delta_2 =\delta$ we get
$$ \log {M_g(r_1)\over M_g(r_2)} \le N_1\,.$$
The result then follows by the following lemma after also simplifying
$$ {r_1^2+r_2 r\over r_1(r_2 + r)} \ge {r_1 + r \over r_2 + r} = {2r + 2\delta\over 2r +\delta}\,. $$
\end{proof}

\begin{lemma}\label{lmm:countz} Let $f$ be a non-zero holomorphic function on a disk of radius $r_1$. Let $n_f(r) \eqdef |\{z: |z|\le r, f(z)=0\}|$ and
$M_f(r) \eqdef \sup_{|z|< r} |f(z)|$. For any $r<r_2<r_1$ we have
	$$ n_f(r) \le {1\over \log {r_1^2+r_2 r\over r_1(r_2 + r)}} \log {M_f(r_1)\over M_f(r_2)}\,.$$
	This bound is achieved by $f(z) = \left(r-z\over 1-rz\right)^n$.
\end{lemma}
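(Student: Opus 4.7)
The plan is to establish the inequality via a finite Blaschke-product argument on the open disk $D_{r_1} \eqdef \{|z|<r_1\}$. First, enumerate the zeros of $f$ in $\{|z|\le r\}$ with multiplicity as $z_1,\ldots,z_n$ where $n=n_f(r)$, and form the normalized Blaschke product
\[
B(z) \eqdef \prod_{k=1}^{n}\frac{r_1(z-z_k)}{r_1^2 - \bar z_k z}.
\]
The factors are holomorphic on $D_{r_1}$ (their poles at $r_1^2/\bar z_k$ lie outside, since $|z_k|\le r<r_1$), and a short calculation using $\bar z = r_1^2/z$ on $|z|=r_1$ shows $|B(z)|=1$ on that circle. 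Consequently $g \eqdef f/B$ extends to a holomorphic function on $D_{r_1}$, and the maximum modulus principle (rigorized by letting the radius tend to $r_1$ and using $|B|\to 1$ uniformly on the boundary) yields $|g(z)|\le M_f(r_1)$ throughout $D_{r_1}$. Combined with $|f|=|g||B|$ this gives
\[
M_f(r_2) \le M_f(r_1)\cdot \max_{|z|=r_2}|B(z)|.
\]

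The crux of the argument is the estimate
\[
\max_{|z|=r_2}|B(z)| \le \left(\frac{r_1(r_2+r)}{r_1^2 + r r_2}\right)^n,
\]
after which taking logarithms and rearranging immediately produces the lemma. To get this I will bound each factor separately: writing $z=r_2 e^{i\beta}$, $z_k=\rho_k e^{i\phi_k}$ with $\rho_k\le r$, and $t=\cos(\beta-\phi_k)\in[-1,1]$, one has
\[
|B_k(z)|^2 = \frac{r_1^2(r_2^2 + \rho_k^2 - 2\rho_k r_2 t)}{r_1^4 + \rho_k^2 r_2^2 - 2 r_1^2 \rho_k r_2 t}.
\]
Differentiating in $t$ should yield (via the quotient rule) a numerator that factors as a negative multiple of $(r_1^2-r_2^2)(r_1^2-\rho_k^2)$, strictly negative in our regime, so the maximum over $t$ is attained at $t=-1$ and equals $\frac{r_1(r_2+\rho_k)}{r_1^2+\rho_k r_2}$. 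A second one-line derivative shows this expression is monotone increasing in $\rho_k\in[0,r]$ and hence bounded by $\frac{r_1(r_2+r)}{r_1^2+r r_2}$. Since $\max_z\prod_k|B_k(z)|\le \prod_k\max_z|B_k(z)|$, the core estimate follows.

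For tightness, I will verify the proposed witness $f(z) = \left(\frac{r-z}{1-rz}\right)^n$ directly: with $r_1=1$ it is (up to sign) exactly the Blaschke product on the unit disk with all $n$ zeros collapsed at $z=r$, so $|f|\equiv 1$ on $|z|=1$ and each inequality in the chain above becomes an equality at $z=-r_2$. The only nontrivial computation in the whole plan is the monotonicity of $|B_k|^2$ in the two real parameters $t$ and $\rho_k$, and I expect that bookkeeping to be the main obstacle; everything else is standard Blaschke-product theory combined with maximum modulus.
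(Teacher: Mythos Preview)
Your proposal is correct and follows essentially the same route as the paper's proof: factor out the zeros in $\{|z|\le r\}$ as a finite Blaschke product on the disk of radius $r_1$, use the maximum modulus principle to bound $|f/B|$ by $M_f(r_1)$, and then bound each Blaschke factor on $\{|z|=r_2\}$ by explicitly computing $|B_k(r_2 e^{i\beta})|^2$, optimizing first over the angle (maximum at the antipodal configuration) and then over the zero's modulus. The paper normalizes to $r_1=1$ at the outset, whereas you carry $r_1$ throughout, but this is purely cosmetic; the calculations and the tightness verification are identical.
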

\begin{proof} Without loss of generality we assume $r_1=1$. If $M_f(1)=\infty$ then there is nothing to prove. Otherwise, the bound is equivalent to showing
	\begin{equation}\label{eq:bla_x}
		M_f(r_2) \le M_f(1) C(r,r_2)^{-n_f(r)}\,, \qquad C(r,r_2) \eqdef {1+r_2 r \over r_2 + r} > 1
\end{equation}	
which means that every zero inside $rD$ reduces the magnitude of $f$ on the boundary of $r_2 D$ by a factor $C$. 
To show this, let us denote by $\{a_i\}$ the list of $n=n_f(r)$ zeros of $f$ inside $rD$ (with multiplicity). Thus we can write 
\begin{equation}\label{eq:bla_0}
	f(z) = g(z) \prod_{i=1}^{n} B_{a_i}(z)\,,
\end{equation}
where $B_a(z) \eqdef {|a|\over a} {a-z\over 1-\bar a z}$ is the Blaschke factor, and $g(z)$ is holomorphic on $D$ (and
has no zeros in the closed disk of radius $r$, but this is not going to be used below). Let us show that for any $|z|\le r_2$ and $|a|<r_2$ we have 
\begin{equation}\label{eq:bla_1}
	|B_a(z)| \le {|a|+r_2\over 1+|a|r_2}\,.
\end{equation}
Indeed, by the maximum principle it is sufficient to consider $z=r_2 e^{i\phi}, \phi \in [0,2\pi)$ and by rotating the disk, we can also assume
$a>0$. Then
\begin{equation}\label{eq:bla_2}
	|B_a(r_2 e^{i\phi})|^2 = {(a-r_2 \cos \phi)^2  + r_2^2 \sin^2 \phi\over 
			(1-ar_2 \cos \phi)^2 + a^2 r_2^2 \sin^2 \phi} = {a^2+r_2^2 - 2ar_2 \cos \phi \over 1+a^2 r_2^2 -
			2ar_2 \cos\phi}\,.
\end{equation}			
Since $a^2 + r_2^2 < 1+a^2 r_2^2$ we find that \prettyref{eq:bla_2} is maximized at $\phi = \pi$, thus proving~\eqref{eq:bla_1}. Furthermore,
from~\eqref{eq:bla_2} applied with $r_2=1$ we also note that $|B_a(z)|=1$ whenever
$|z|=1$, which via~\eqref{eq:bla_0} implies $M_g(1)=M_f(1)$.

Finally, from~\eqref{eq:bla_0}-\eqref{eq:bla_1} and the fact that $|g(z)|\le M_f(1)$ for all $z\in D$ we conclude that for any $|z|\le
r_2$ we have
	$$ |f(z)| \le M_f(1) \prod_{i=1}^n {|a_i|+r_2\over 1+|a_i|r_2}\,.$$
This concludes the proof of~\eqref{eq:bla_x} after noticing that each factor is upper bounded by ${1\over C(r,r_2)}$.
\end{proof}

As an application of \prettyref{thm:crit}, we now prove \prettyref{thm:gaussian} for Gaussian location mixtures.
\begin{proof}
Choose $\xmax=\max_{i\in[n]} x_i$ and $\xmin=\min_{i\in[n]} x_i$.
Recall that $p_\theta$ denote the density of $N(\theta,1)$. 
In this model, we have 
$\utheta=-\infty$, $\otheta=\infty$, 
$\kappa(\theta)=\theta^2/2$ and $\mu(\theta)=\theta$. 
Thus $\thetamin=\xmin$, $\thetamax=\xmax$, $r=a=\frac{1}{2}(\xmax-\xmin)$, $\tau=\delta$.
Conveniently, note that for \emph{location family}, we have the following translation invariance:
Let $T_x(\pi)$ denote the pushforward of $\pi$ under the translation $\cdot + x$. Then $\NPMLEpi(x_1+x,\ldots,x_n+x) = T_x(\NPMLEpi(x_1,\ldots,x_n))$.
Therefore, without loss of generality, we can assume $\xmin=-r \leq 0 \leq \xmax=r$, so that $x_0=0$ and $|x|_{\max}=r$.

	Choosing $\delta =r$ yields \prettyref{eq:npmle-gaussian}.
	Finally, the high-probability statement follows from $\prob{|x_i|\geq \tau} \leq \exp(-c \tau^2)$ for some constant $c$ and a union bound.
\end{proof}

	%
	%
	%

The examples of Gaussian and Poisson models (\prettyref{thm:gaussian} and \prettyref{ex:poisson}) seem to suggest that NPMLE is always $O(\log n)$-atomic with high probability. Indeed, there is some degree of universality to this bound, as the following result shows.  
The extra condition we impose essentially says that the tail probability $\Prob_{0}\{|X|\geq a\}$ behaves as $\exp(-a^c)$ for some $c>1$.
For notational convenience, we will assume that the base measure $p_0$ is symmetric around zero.
\begin{theorem}
\label{thm:logn} Fix $2<K_0\le K_1$ and $\theta_0,b,\beta>0$. Then there exist $n_0,C$ depending on $(K_0,K_1,\beta,b)$ with
the following property. Consider any density $p_0$ symmetric around zero whose log-MGF satisfies
\begin{equation}\label{eq:kappa_growth}
	K_0 \kappa(\theta) \le \kappa(2\theta) \le K_1 \kappa(\theta) \qquad \forall |\theta| >\theta_0\,.
\end{equation}
Let $x_1,\ldots,x_n \iiddistr p_\pi$ for some mixing distribution $\pi$ supported on the interval $[-b,b]$.
Then for all $n\geq n_0$, with probability $1-2n^{-\beta}$, $\NPMLEpi$  has at most $C\log n$ atoms.
\end{theorem}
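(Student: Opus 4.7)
The plan is to combine the deterministic bound of Theorem~\ref{thm:crit} with a stretched-exponential tail estimate on the observations. First I extract regularity consequences of the two-sided doubling hypothesis~\eqref{eq:kappa_growth}. Writing $\kappa(\theta) = \kappa(\theta/2) + \int_{\theta/2}^{\theta}\mu \le \kappa(\theta)/K_0 + (\theta/2)\mu(\theta)$ and rearranging gives
\[
\kappa(\theta) \;\le\; \frac{K_0}{2(K_0-1)}\,\theta\,\mu(\theta),
\]
whose prefactor is strictly less than $1$ precisely because $K_0>2$. Combined with the matching lower bound $\kappa(\theta)\ge \theta\mu(\theta)/K_1$ (from $K_1\kappa(\theta)\ge\kappa(2\theta)\ge \theta\mu(\theta)$) this yields $\kappa(\theta)\asymp\theta\mu(\theta)$, and hence, at $\theta=\mu^{-1}(t)$,
\[
\kappa^*(t) \;=\; t\mu^{-1}(t)-\kappa(\mu^{-1}(t)) \;\ge\; c_0\,t\,\mu^{-1}(t), \qquad c_0:=\frac{K_0-2}{2(K_0-1)}>0.
\]
A similar iteration of~\eqref{eq:kappa_growth} shows that for some $c=c(K_0,K_1)$ the ratio $\mu(c\theta)/\mu(\theta)$ is bounded below by a constant strictly greater than $1$.

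Next I establish the tail bound. Since $\pi$ is supported on $[-b,b]$ and $\kappa\ge 0$, for $\lambda>0$,
\[
\Expect[e^{\lambda X}] \;=\; \int e^{\kappa(\lambda+\theta)}\pi(d\theta) \;\le\; e^{\kappa(\lambda+b)},
\]
and Chernoff optimized at $\lambda=\mu^{-1}(t)-b$ gives $\prob{X>t}\le \exp(bt-\kappa^*(t))\le\exp(-c_0 t\mu^{-1}(t)/2)$ as soon as $\mu^{-1}(t)\ge 4b/c_0$. Symmetry of $p_0$ gives the same bound for the lower tail, so a union bound yields: with probability at least $1-2n^{-\beta}$, every $|X_i|\le R$, where $R$ is defined implicitly by $R\,\mu^{-1}(R)=C\log n$ for a constant $C=C(\beta,b,K_0)$.

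On this event I apply Theorem~\ref{thm:crit} with $\xmin=-R$, $\xmax=R$, so that $a=R$, $x_0=0$, and by symmetry $\thetamax=-\thetamin=\mu^{-1}(R)$, $r=\thetamax$, $|\theta|_{\max}=\thetamax$. The hypothesis $\xmin\le\mu(0)\le\xmax$ is automatic since $p_0$ symmetric forces $\mu(0)=0$, and the upper doubling bound makes $L$ entire so that $\utheta=-\infty,\otheta=\infty$ and $\delta$ is unconstrained. The crucial observation — subtle in view of the Gaussian case where $r$ and $a$ coincide — is that $r=\thetamax$ lives in $\theta$-space and is generally much smaller than $a=R$. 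I set $\delta$ to be a constant multiple of $\thetamax$, chosen large enough (depending on $K_0,K_1$) that $\tau=\mu(\thetamax+\delta)-R\gtrsim R$. Then the denominator $\log\frac{2r+2\delta}{2r+\delta}$ is a positive constant, the product term $2a(|\theta|_{\max}+2\delta)\asymp R\thetamax=C\log n$ by construction, the curvature term $\kappa_{\max}=\kappa(\thetamax+3\delta)\le K_1^{O(1)}\kappa(\thetamax)\asymp R\thetamax\lesssim\log n$ by the first step, and the logarithmic term $\log((|x|_{\max}+1/\delta)/\tau)=O(\log\log n)$. Combining, $N_1\lesssim\log n$ and hence $|\supp(\NPMLEpi)|\lesssim\log n$ on the good event.

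The main obstacle is the quantitative bookkeeping of how the doubling constants propagate through the regularity step. The strict inequality $K_0>2$ is essential: it is exactly what makes $c_*:=\frac{K_0}{2(K_0-1)}<1$, providing the non-trivial Legendre lower bound on $\kappa^*$ and thereby the stretched-exponential tail — at $K_0=2$ the cumulant could be linear and $\kappa^*$ degenerates. A related subtle point is the choice $\delta\asymp\thetamax$ rather than $\delta\asymp r$ as used for the Gaussian proof (where $r=a$): for general $\alpha=\log_2 K_0>2$ one has $a\gg\thetamax$, and setting $\delta\asymp r$ is wasteful; working in the correct $\theta$-scale is what simultaneously keeps $\kappa(\thetamax+3\delta)$ comparable to $\kappa(\thetamax)$ and the denominator bounded away from zero.
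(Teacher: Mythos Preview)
Your proposal is correct and follows essentially the same strategy as the paper: extract the regularity $\kappa(\theta)\asymp\theta\mu(\theta)$ and hence $\kappa^*(t)\asymp t\,\mu^{-1}(t)$ from the doubling hypothesis, turn this into a stretched-exponential Chernoff tail bound for the observations, and then apply Theorem~\ref{thm:crit} with $\delta$ a constant multiple of $\theta_{\max}$ so that every term in $N_1$ is $O(\log n)$. The only notable difference is your treatment of the $\tau$ lower bound: you argue directly that iterating~\eqref{eq:kappa_growth} gives $\mu(c\theta)\ge 2\mu(\theta)$ for some fixed $c$, and take $\delta=(c-1)\theta_{\max}$ to get $\tau\gtrsim R$, whereas the paper runs a slightly longer contradiction argument to locate a point $\theta^*$ with $\mu(2\theta^*)-\mu(\theta^*)>1$, sets $\theta_{\max}=\theta^*$, and uses $\delta=2\theta_{\max}$ to obtain $\tau>1$---the same idea packaged differently.
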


\begin{remark}
	\prettyref{thm:logn} shows that the Gaussian tail is not essential for the $O(\log n)$ result to hold. 
	In fact, consider any smooth density $p_0$ such that $-\log p_0(x) \asymp |x|^\alpha$ for $\alpha>1$.	
	Then by saddle-point approximation we have 
	$\kappa(\theta) \asymp \theta^{\alpha/(\alpha-1)}$ as $\theta\to\infty$, which satisfies \prettyref{eq:kappa_growth}.
	
	On the other hand, compactly supported families are excluded since for those distributions $\kappa(\theta)$ is
	asymptotically linear (with a slope given by the essential supremum of $p_0$) as $\theta\to\infty$. Furthermore,
	exponential tails are also excluded. This is directly related to the open problem with mixtures of exponential distributions which will be discussed in \prettyref{sec:discuss-fail}.
\end{remark}

	%

\begin{proof}[Proof of \prettyref{thm:logn}]
We start by establishing properties of $\kappa(\cdot)$ and $\mu(\cdot)$ implied by conditions of the theorem.
	Under the symmetry assumption of $p_0$, $\kappa(\theta)$ is an even convex function with 
$\kappa(\theta) \geq \kappa(0)=0$ and $\mu(0)=0$. 
From the convexity of $\kappa$ we get for any $\theta>0$ 
	\begin{align*} \kappa(\theta) &\le \kappa(\theta/2) + {\theta \over 2} \mu(\theta) \\
			\kappa(2\theta) &\ge \kappa(\theta) + {\theta} \mu(\theta)
	\end{align*}
	And thus, for $\theta > \theta_0$ we get
		\begin{align} \mu(\theta) \theta &\ge C_0 \kappa(\theta), \quad C_0 = 2{K_0-1\over K_0} > 1
		\label{eq:gg1}\\
				\mu(\theta) \theta & \le C_1 \kappa(\theta), \quad C_1 = K_1-1 > 0\label{eq:gg2}
\end{align}		
Clearly, also, $\mu(\theta)\to\infty$ as $\theta\to\infty$ and hence $p_0$ is supported on the whole of $\mreals$. Thus we have $\utheta=-\infty$ and $\otheta=\infty$. 

Define the rate function for $a>0$: 
\[
E(a) \triangleq \sup_{\theta>0} a\theta-\kappa(\theta),
\]
which is achieved at $\theta=\rho\triangleq \mu^{-1}(a)$, so that $E(a) =  a\rho-\kappa(\rho)$.
From~\eqref{eq:gg1}-\eqref{eq:gg2} (noting $C_0>1$) we conclude that as $a\to\infty$ (and hence $\rho\to \infty$) we get:
\begin{equation}\label{eq:gg3}
	E(a) \asymp a \rho \asymp \kappa(\rho) 
\end{equation}

We need to establish one more consequence of~\eqref{eq:kappa_growth}. Namely, there exists $\theta_0' > 0$ and $m_0 \in
\naturals$
such that for any $\theta_1 > \theta_0'$ there exists $\theta^* \in [\theta_1, 2^{m_0-1} \theta_1]$ such that
\begin{equation}\label{eq:curvature}
	\mu(2\theta^*) - \mu(\theta^*) > 1\,.
\end{equation}
To show this, we select $m_0 > \log_2 {4 (K_0-1)\over K_0-2}$ and $\theta_0'\geq \theta_0$ so large that $\mu(\theta_0') \ge {4m_0\over K_0-2}$.
Now suppose (for the sake of contradiction) that for all $\theta \in [\theta_1, 2^{m_0-1} \theta_1]$ we have 
$$ \mu(2\theta) - \mu(\theta) \le 1\,.$$ 
Denoting $\theta_2 \eqdef 2^{m_0} \theta_1$, applying the above inequality repeatedly yields $\mu(\theta_2) \le \mu(\theta_1) + m_0$. Consequently, from the convexity
of $\kappa$ we have
$$ \kappa(\theta_2) \le \kappa(\theta_1) + (2^{m_0}-1) \theta_1 (\mu(\theta_1) + m_0)\,. $$
On the other hand, 
$$ \kappa(\theta_2/2) \ge \kappa(\theta_1) + (2^{m_0-1}-\theta_1) \mu(\theta_1)\,.$$
Taking the ratio of these, we get from~\eqref{eq:kappa_growth}:
$$ \kappa(\theta_1) + (2^{m_0}-1) \theta_1 (\mu(\theta_1) + m_0) \ge K_0 \left(\kappa(\theta_1) + (2^{m_0-1}-1)\theta_1
\mu(\theta_1)\right)\,.$$
Rearranging terms we arrive at 
$$ 2^{m_0} \theta_1 \left( \left({K_0\over 2} - 1\right) \mu(\theta_1) - m_0 \right) \le (\theta_1 \mu(\theta_1) - \kappa(\theta_1)) (K_0-1) -
\theta_1 m_0\,.$$
Dropping all negative terms on the right, and noticing that by the choice of $\theta_0'$ we have $({K_0\over 2} - 1)
\mu(\theta_1) - m_0 \ge {1\over 2} ({K_0\over 2} - 1) \mu(\theta_1)$, we conclude
$$ 2^{m_0} {1\over 2} \left({K_0\over 2} - 1\right) \theta_1 \mu(\theta_1)\le (K_0 - 1) \theta_1 \mu(\theta_1) \,.$$
By the choice of $m_0$, however, this is impossible. Hence, there must exist $\theta^*$ satisfying~\eqref{eq:curvature}.

Having established~\eqref{eq:gg3} and~\eqref{eq:curvature} we proceed to the proof of the theorem. Let $X \sim p_\pi$.
By the Chernoff bound, for any $\theta>0$, $\prob{X\geq a} \leq e^{-\theta a} \Expect[e^{\theta X}]$. Here 
\[
\Expect[e^{\theta X}] = 
 \int \pi(d\theta') \int dx p_0(x) e^{(\theta'+\theta) x-\kappa(\theta')} = 
 \int \pi(d\theta')\frac{L(\theta'+\theta)}{L(\theta')} \leq L(\theta+b),
\]
where the last inequality follows from the fact that $L$ is an even function lower bounded by $L(0)=1$, and 
$L(\theta'+\theta)\leq L(\theta+b)$ for any $\theta>0$ and $\theta'\in[-b,b]$.
Optimizing over $\theta$ we set $\theta=\rho-b$ and obtain
$
\prob{X \geq a} \leq e^{ab-E(a)},
$
provided that $\rho>b$.

Since we aim to apply Theorem~\ref{thm:crit}, we need to choose $\xmax$ and $\xmin$. We set them as follows. First we set
$\theta_1$ so that $a_1=\mu(\theta_1)$ verifies $E(a_1)-a_1b =  (1+\beta) \log
n$. Note that as $n\to\infty$ we have $a_1,\theta_1 \to \infty$. In the sequel, we assume that $n$ is so large that
$\theta_1> \theta_0'$ and $\theta_1 > b$. Notice that from~\eqref{eq:gg3} we have $E(a_1) \gg a_1$ and hence 
\begin{equation}\label{eq:gg4}
	E(a_1) \asymp \theta_1 a_1 \asymp \kappa(\theta_1) \asymp \log n\,. 
\end{equation}

Next, having selected $\theta_1$ we use~\eqref{eq:curvature} to select $\thetamax =
\theta^*$. We set $\xmax = \mu(\thetamax)$ and $\xmin = -\xmax, \thetamin =
-\thetamax$, so that $x_0=(\xmin+\xmax)/2=0$. 
 Then we have 
$\prob{X \geq \xmax} \leq \prob{X \geq a_1} \leq n^{-(1+\beta)}$.
Similarly, $\prob{X \leq -\xmin} \leq n^{-(1+\beta)}$. By the union bound this implies 
that with probability at least $1-2 n^{-\beta}$, we have $\xmin \le \min_i x_i \le \max_i x_i \le \xmax$.
Now we apply \prettyref{thm:crit} with $\delta=2\thetamax$, obtaining
	\begin{equation}
	|\supp(\NPMLEpi)|\lesssim 
		\thetamax \xmax  + \kappa(4\thetamax) +  \log \frac{\xmax + {1\over 2\thetamax}}{\tau},
	\label{eq:logn1}
	\end{equation}
		where $\tau=\mu(3\thetamax) - \mu(\thetamax) \ge \mu(2\thetamax) - \mu(\thetamax) > 1$
		by~\eqref{eq:curvature}. Consequently, the last term in~\eqref{eq:logn1} is dominated by the first.

Finally, we note that $\thetamax \in [\theta_1, 2^{m_0-1} \theta_1]$ and thus $\thetamax \asymp \theta_1$.
From~\eqref{eq:kappa_growth} we have $\kappa(\thetamax) \asymp \kappa(\theta_1)\asymp \log n$. Similarly,
from~\eqref{eq:gg3} we get $\thetamax \xmax \asymp \kappa(\thetamax) \asymp \log n$. In all, the right-hand side
of~\eqref{eq:logn1} is $\asymp \log n$ as claimed.
\end{proof}
	

\section{Statistical consequences on NPMLE}
	\label{sec:stats}
	
	In this section we show how the self-regularization property of the NPMLE allows one to ``bootstrap'' existing results on MLE in finite Gaussian models to infinite mixtures. 
	
The following statistical guarantee on NPMLE is due to Zhang \cite{zhang2009generalized}, improving over previous result of \cite{ghosal.vdv,ghosal2007posterior}.
\begin{theorem}
\label{thm:zhang}
	Let $X_1,\ldots,X_n \iiddistr p_\pi \triangleq \pi * \varphi$ and let $\hat\pi=\NPMLEpi(X_1,\ldots,X_n)$ be given in \prettyref{eq:NPMLE}. Then
	\begin{equation}
\sup_{\pi \in \calM_{\SG}(1)}	\Expect_\pi[H^2(p_{\hat \pi}, p_{\pi})] \lesssim \frac{\log^2 n}{n},
	\label{eq:zhang}
	\end{equation}
	where $\calM_{\SG}(s)$ denote the collection of all $s$-subgaussian distributions on $\reals$.
\end{theorem}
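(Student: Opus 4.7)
The plan is to combine the self-regularization property from Theorem~\ref{thm:gaussian} with a standard Hellinger rate bound for parametric MLE over finite Gaussian mixtures, as developed by Maugis and Michel \cite{maugis2011non}. The key gain is that Theorem~\ref{thm:gaussian} pins down, with high probability, the \emph{effective} model class containing the NPMLE: $K$-atomic Gaussian mixtures with $K = O(\log n)$, whose bracketing entropy is only $O(\log^2 n)$. This lets us sidestep any direct entropy estimate on the full nonparametric class.

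\textbf{Step 1 (good event).} Fix $\tau > 1$. By Theorem~\ref{thm:gaussian} and a standard subgaussian tail bound on the data, on an event $E$ of probability at least $1 - 2n^{-\tau}$ we have (i) $|\supp(\hat\pi)| \leq K := C\log n$ for some $C = C(s,\tau)$, and (ii) $\max_i |X_i| \leq B := C'\sqrt{\log n}$. Moreover, the optimality condition~\eqref{eq:kkt} forces $\supp(\hat\pi) \subset [-B_1, B_1]$ for some $B_1 = O(\sqrt{\log n})$, since for $|\theta|$ much exceeding $B$ the Gaussian likelihood ratios $p_\theta(x_i)/p_{\hat\pi}(x_i)$ are uniformly small. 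Hence on $E$, $\hat\pi \in \calM_K([-B_1, B_1])$.

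\textbf{Step 2 (approximation bias).} By the moment-matching construction described in Remark~\ref{rmk:gaussian-tight} (see \cite[Lemma~8]{WY18}), for every $s$-subgaussian $\pi$ there exists $\tilde\pi \in \calM_K([-B_1, B_1])$ with $\TV(p_\pi, p_{\tilde\pi}) \leq e^{-c_0 K}$; choosing $C$ large enough makes this $\leq n^{-2}$, so $H^2(p_\pi, p_{\tilde\pi}) \leq 2\TV(p_\pi,p_{\tilde\pi}) = o(1/n)$.

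\textbf{Step 3 (parametric MLE rate).} Global optimality of $\hat\pi$ yields the basic inequality $\ell(\hat\pi) \geq \ell(\tilde\pi)$. On $E$, $\hat\pi$ lies in the finite-dimensional class $\calF_K := \{p_\pi : \pi \in \calM_K([-B_1, B_1])\}$, whose Hellinger bracketing entropy satisfies $\log N_{[\,]}(\epsilon, \calF_K, H) \lesssim K\log(B_1 n/\epsilon) \lesssim \log^2 n$ at scale $\epsilon = 1/\sqrt{n}$, by the classical estimates for finite Gaussian mixtures of \cite{maugis2011non}. The Wong--Shen / van~de~Geer MLE theorem converts the basic inequality into
\[
H^2(p_{\hat\pi}, p_\pi) \;\lesssim\; H^2(p_{\tilde\pi}, p_\pi) \,+\, \frac{\log N_{[\,]}(1/\sqrt n, \calF_K, H)}{n} \;=\; O\!\pth{\frac{\log^2 n}{n}}.
\]
Off the good event, $H^2 \leq 2$ together with $\Prob[E^c] \leq 2n^{-\tau}$ contributes only $O(n^{-\tau}) = o(\log^2 n/n)$ to the expected risk, finishing the bound.

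The main obstacle is Step~3: cleanly importing the parametric Hellinger rate when $\hat\pi \in \calF_K$ only holds on the random event $E$. The standard remedy is a sieve/truncation trick --- define a modified estimator that coincides with $\hat\pi$ on $E$ and with the fixed $\tilde\pi$ off $E$, verify that the basic inequality survives up to an $O(n^{-\tau})$ additive error, and then apply the MLE theorem to this sieved estimator over the \emph{deterministic} class $\calF_K$. Tracking the $\sqrt{\log n}$-sized support radius $B_1$ inside the bracketing entropy is routine but must be done carefully to land at $\log^2 n/n$ rather than an extra $\polylog$ factor.
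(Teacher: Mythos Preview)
Your proposal is correct and follows the same overall architecture as the paper: (i) localize the NPMLE to a finite-mixture class via \prettyref{thm:gaussian}, (ii) approximate the true $\pi$ by a finite mixture using moment matching, and (iii) invoke the Maugis--Michel bracketing entropy to get the $\log^2 n/n$ rate. The paper states the approximation and parametric-MLE steps as separate auxiliary lemmas (your Steps~2--3), so the structure matches closely.

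The one substantive difference is in how Step~3 is executed. You propose to apply a misspecified/oracle form of the Wong--Shen theorem, and you correctly flag the awkwardness that $\hat\pi\in\calF_K$ only on the random event $E$, suggesting a sieve/truncation patch. The paper sidesteps this issue entirely by a \emph{coupling} argument: since $\TV(p_\pi,p_{\pi'})\le n^{-3}$ (your $\tilde\pi$), one has $\TV(p_\pi^{\otimes n},p_{\pi'}^{\otimes n})\le n^{-2}$, so there is a coupling with $X_i=Y_i$ for all $i$ on an event $E_2$ of probability $\ge 1-n^{-2}$, where $Y_i\iiddistr p_{\pi'}$. On $E_0\cap E_2$ the NPMLE on $X$ literally equals the parametric MLE $\hat\pi_{k,a}(Y_1,\ldots,Y_n)$, and now the parametric rate (your \cite{maugis2011non} input) can be applied in the \emph{well-specified} setting with data from $p_{\pi'}\in\calF_K$, with no random-sieve complication. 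Your route works but requires the misspecified oracle inequality and the modified-estimator argument you sketch; the paper's coupling reduces to the cleanest possible well-specified statement at the cost of one extra line.
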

Next we show that using the self-regularization of the NPMLE, \prettyref{thm:zhang} can be deduced from existing guarantees on MLE in finite mixture models.
We need a couple of auxiliary results, whose proofs are deferred to the end of this section. The following result is on approximating a general Gaussian mixture by finite mixtures:
\begin{lemma}
\label{lmm:approx}	
	Let $\pi$ be 1-subgaussian. For any $a>0$ and any $k \in \naturals$, there exists a $k$-atomic $\pi'$ supported on $[-a,a]$, such that 
	\[
	\TV(p_\pi, p_{\pi'}) \leq 2 e^{-a^2/2} + 2 e^{a^2/4} \pth{\frac{e a^2}{2k}}^{k}
	\]
\end{lemma}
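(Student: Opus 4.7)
The plan is to build $\pi'$ in two stages --- first truncate $\pi$ to $[-a,a]$, then replace the truncation with a $k$-atomic measure obtained by matching moments --- and control the two errors separately via the triangle inequality.

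First, let $\tilde\pi$ be the law of $\theta\sim\pi$ conditioned on $|\theta|\le a$. Since $\pi$ is $1$-subgaussian, the standard Chernoff bound gives $\pi\{|\theta|>a\}\le 2e^{-a^2/2}$, hence $\TV(\pi,\tilde\pi)\le 2e^{-a^2/2}$. Since $p_\pi=\pi*\varphi$ and $p_{\tilde\pi}=\tilde\pi*\varphi$, the data processing inequality for TV under convolution with $\varphi$ yields $\TV(p_\pi,p_{\tilde\pi})\le 2e^{-a^2/2}$, which is the first summand in the claimed bound. It then suffices to find a $k$-atomic $\pi'$ on $[-a,a]$ with $\TV(p_{\tilde\pi},p_{\pi'})\le 2e^{a^2/4}(ea^2/(2k))^k$.

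Second, since $\tilde\pi$ is supported on a compact interval, I would invoke Gauss quadrature (or Tchakaloff's theorem) to produce a $k$-atomic probability measure $\pi'$ supported on $[-a,a]$ matching the first $2k-1$ moments of $\tilde\pi$. To convert moment matching into a TV bound, I would use the identity
\[
p_\mu(x) = \varphi(x)\,\Expect_{\theta\sim\mu}\!\left[e^{x\theta-\theta^2/2}\right] = \varphi(x)\sum_{j\ge 0}\frac{H_j(x)}{j!}\,m_j(\mu),
\]
which follows from the Hermite generating function (with $H_j$ the probabilists' Hermite polynomials and $m_j(\mu)=\int\theta^j\,d\mu$). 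After subtracting for $\mu=\tilde\pi$ and $\mu=\pi'$, the first $2k$ terms cancel, and since both measures are supported in $[-a,a]$, $|m_j(\tilde\pi)-m_j(\pi')|\le 2a^j$ for $j\ge 2k$.

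The main work, and the principal obstacle, is estimating the $L^1$-norm of the resulting tail series $\varphi(x)\sum_{j\ge 2k}H_j(x)\,\Delta m_j/j!$. Using the Cram\'er-type pointwise bound $|H_j(x)|\le C\sqrt{j!}\,e^{x^2/4}$ (equivalently, Cauchy--Schwarz combined with $\int H_j^2\varphi = j!$) together with Stirling's estimate $\sqrt{(2k)!}\ge (2k/e)^k$, the leading term is controlled by $(ea^2/(2k))^k$, and for $a^2\lesssim k$ the subsequent terms form a geometric majorant. Tracking the Gaussian weight against the pointwise Hermite bound produces the prefactor $e^{a^2/4}$; this is the only step that requires care, with the rest being a routine triangle-inequality assembly combining it with the truncation error from the first step.
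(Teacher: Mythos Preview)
Your two-stage plan (truncate to $[-a,a]$, then Gauss quadrature to match $2k-1$ moments) is exactly the paper's approach. The only difference is in how the moment-matching error is converted into a distance bound: the paper invokes a $\chi^2$ inequality from \cite[Lemma 8]{WY18}, namely $\chi^2(p_{\pi'}\Vert p_{\tilde\pi})\le 4e^{a^2/2}(ea^2/(2k))^{2k}$, and then applies $2\TV^2\le\chi^2$, whereas you bound $\TV$ directly via the Hermite expansion and Cram\'er's pointwise bound. Your route is essentially what lies behind the cited lemma, so the arguments are equivalent in spirit.

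One small correction: the prefactor $e^{a^2/4}$ does not actually arise from your computation. With $|H_j(x)|\le K\sqrt{j!}\,e^{x^2/4}$ you get $\int \varphi(x)e^{x^2/4}\,dx=\sqrt{2}$, a universal constant, so in the nontrivial regime $ea^2<2k$ your direct $L^1$ bound yields $\TV(p_{\tilde\pi},p_{\pi'})\le C(ea^2/(2k))^k$ with $C$ absolute --- slightly sharper than the stated lemma. The $e^{a^2/4}$ in the paper's version is an artifact of passing through $\chi^2$ (it is the square root of the $e^{a^2/2}$ that appears there). Either way the claimed inequality holds, since $e^{a^2/4}\ge 1$.
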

Next we recall the statistical guarantee on the parametric MLE in finite Gaussian mixtures. By standard results on MLE (cf.~e.g.~\cite{vandeGeer2000}), this can be deduced from the bracketing entropy for this class, which has been thoroughly investigated in the literature \cite{ghosal.vdv,genovese.wasserman,zhang2009generalized,maugis2011non}. The following result is a corollary of the entropy bound of Maugis and Michel in \cite{maugis2011non}.
\begin{lemma}
\label{lmm:pmle}	
Let $a\geq 1$ and $k \in \naturals$. 
Let $\hat \pi_{k,a} = \hat \pi_{k,a}(Y_1,\ldots,Y_n)$ is the (parametric) MLE defined in \prettyref{eq:parametric-MLE}, 
where $Y_i \iiddistr p_{\pi}$. There exists a universal constant $C$ such that
	\begin{equation}
\sup_{\pi\in\calM_{k,a}} \Expect_\pi[H^2(p_{\hat \pi_{k,a}}, p_{\pi})] \leq  \frac{C k}{n} \log \frac{na^2}{k}, \label{eq:pmle}
\end{equation}
where $\calM_{k,a}$ denotes the collection of all $k$-atomic distributions on $[-a,a]$.
\end{lemma}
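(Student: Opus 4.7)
The plan is to reduce the lemma to a standard bracketing-entropy bound on the Hellinger risk of the MLE in a parametric density class, combined with the bracketing entropy estimate of Maugis--Michel \cite{maugis2011non} for $k$-component Gaussian location mixtures. The class $\calP_{k,a} \eqdef \{p_\pi : \pi \in \calM_{k,a}\}$ is a parametric family indexed by at most $2k-1$ parameters (the $k$ locations in $[-a,a]$ and $k-1$ free weights). Since each $p_\theta = \varphi(\cdot - \theta)$ is Lipschitz in $\theta$ with Lipschitz constant uniform over $[-a,a]$, Maugis--Michel's Lemma/Theorem on Gaussian mixtures yields a bracketing entropy bound of the form
\[
\log N_{[]}(\epsilon, \calP_{k,a}, H) \;\lesssim\; k\,\log\frac{a}{\epsilon}
\]
for all $\epsilon \in (0,1)$, possibly up to additive constants of order $k$.

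Given this entropy estimate, I would invoke the standard MLE Hellinger-risk bound (see, e.g., van de Geer \cite{vandeGeer2000}): if $H_{[]}(\epsilon) \eqdef \log N_{[]}(\epsilon,\calP_{k,a},H)$ satisfies $H_{[]}(\epsilon) \lesssim D \log(A/\epsilon)$, then any MLE $\hat{p}$ over a class containing the truth satisfies
\[
\Expect\bigl[H^2(\hat{p}, p_\pi)\bigr] \;\lesssim\; \delta_n^2,
\]
where $\delta_n$ is any solution of the fixed-point inequality
\[
\sqrt{n}\,\delta_n^2 \;\gtrsim\; \int_0^{\delta_n} \sqrt{H_{[]}(u)}\, du\,,
\]
i.e.\ $\delta_n$ is chosen so that the local bracketing entropy integral is of order $\sqrt{n}\delta_n^2$. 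Plugging in $D = k$ and $A = a$, the integral evaluates (up to constants) to $\sqrt{k}\,\delta_n\sqrt{\log(a/\delta_n)}$, so the resulting fixed point is
\[
\delta_n^2 \;\asymp\; \frac{k}{n}\,\log\!\frac{na^2}{k}\,,
\]
matching the asserted bound after taking the supremum over $\pi \in \calM_{k,a}$.

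The only nontrivial verification, and the one I would spend most care on, is that the bracketing entropy bound in \cite{maugis2011non}---which is stated for location-scale Gaussian mixtures with possibly unbounded parameter ranges---specializes correctly to our location-only, bounded-location setting with the quantitative dependence $k\log(a/\epsilon)$ (as opposed to a weaker $k\log(1/\epsilon) \cdot \log a$, etc.). I would check this by inspecting the Lipschitz constant of $\theta \mapsto \varphi(x-\theta)$ in $L^1$ and using the standard argument that brackets for the base density on $[-a,a]$ at resolution $\epsilon/k$ can be combined (via triangle inequality in $H$ or $\TV$) into brackets for the $k$-mixture at resolution $\epsilon$, yielding the claimed scaling. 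Once the entropy bound is in hand, the rest of the proof is a purely routine application of the bracketing MLE theorem; no mixture-specific argument is needed beyond the already-established entropy estimate.
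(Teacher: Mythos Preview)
Your proposal is correct and follows essentially the same route as the paper: cite the Maugis--Michel bracketing entropy bound $\log N_{[]}(\epsilon,\calP_{k,a},H)\lesssim k\log(a/\epsilon)$, plug it into the standard Hellinger-risk bound for the MLE from van de Geer, and solve the resulting entropy-integral fixed point to obtain $\epsilon_n^2\asymp \frac{k}{n}\log\frac{na^2}{k}$. The only cosmetic difference is that the paper integrates over $[\epsilon^2,\epsilon]$ (as in \cite[Theorem~7.4]{vandeGeer2000}) rather than $[0,\delta_n]$, and simply cites the specific equation in \cite{maugis2011non} rather than re-deriving the entropy bound.
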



\begin{proof}[Proof of \prettyref{thm:zhang}]
	Let $X_1,\ldots,X_n\iiddistr p_\pi = \pi * N(0,1)$ for some 1-subgaussian $\pi$. 
Define the event $E_0 \triangleq \{|X_{\max}| \leq a_0\}$, where $a_0= \sqrt{C_0\log n}$ for some large absolute constant $C_0$. 
Then $E_0$ has probability at least $1-n^{-2}$.
By \prettyref{thm:gaussian},
on the event $E_0$, 
$\hat\pi$ is supported on $[-a_0,a_0]$ and $|\supp(\hat \pi)| \leq C_1 a_0^2 = C_1C_0\log n \triangleq k_0$.
Then for any $k\geq k_0$ and $a\geq a_0$, on the event $E_0$, we have
\begin{equation}
\hat{\pi} = \hat{\pi}_{k,a}(X_1,\ldots,X_n) \triangleq \argmax_{\pi\in\calM_{k,a}} \sum_{i=1}^n \log p_\pi(X_i).
\label{eq:parametric-MLE}
\end{equation}
(In case that \prettyref{eq:parametric-MLE} has multiple maximizers, $\hat{\pi}$ is chosen to be any one of them.)

Pick $a=\sqrt{C_1 \log n}$ and $k=C_2 \log n$ such that $a \geq a_0$, $k \geq k_0$, and $a^2/k \leq 1/10$. Applying \prettyref{lmm:approx}	with this choice, we obtain a $k$-atomic distribution $\pi'$ supported on $[-a,a]$ such that $\TV(p_\pi, p_{\pi'}) \leq n^{-3}$. 
Let $Y_1,\ldots,Y_n\iiddistr p_\pi = \pi * N(0,1)$ for some 1-subgaussian $\pi$.
Then $\TV(\Law(X_1,\ldots,X_n), \Law(Y_1,\ldots,Y_n)) \leq n^{-2}$. Then there exists a coupling such that $X_i=Y_i$ for $i=1,\ldots,n$ with probability at least $1-n^{-2}$. Let $E_2$ denote this event.

On the event of $E_1 \cap E_2$, we have 
\[
\hat{\pi}=\hat{\pi}_{\NPMLE}(X_1,\ldots,X_n) = \hat{\pi}_{k,a}(X_1,\ldots,X_n) = \hat{\pi}_{k,a}(Y_1,\ldots,Y_n).
\]
Now we are in a position to pass the statistical guarantee on the parametric MLE in finite Gaussian mixtures to the NPMLE. Applying \prettyref{lmm:pmle} with $k\asymp \log n$ and $a \asymp \sqrt{\log n}$, we have 
	\begin{equation}
\Expect[H^2(p_{\hat \pi_{k,a}}, p_{\pi'})] \lesssim \frac{\log^2 n}{n}.
\end{equation}
Finally, using the fact that $H^2 \leq \TV$ and the triangle inequality for Hellinger, we have
\[
\Expect[H^2(p_{\hat \pi}, p_{\pi}) \indc{E_0\cap E_1}]
\leq 
2 \Expect[H^2(p_{\hat \pi_{k,a}(Y_1,\ldots,Y_n)}, p_{\pi'})]
+ 2 H^2(p_{\pi}, p_{\pi'}) \leq C_3 \frac{\log^2 n}{n}.
\]
The proof is completed since $H^2 \leq 2$ and $E_0\cap E_1$ has probability at least $1-2n^{-2}$.
\end{proof}

\begin{remark}
\label{rmk:de}	
The following minimax lower bound is shown in \cite{kim2014minimax}:
	\begin{equation}
\inf_{\hat p}\sup_{\pi \in \calM_{\SG}(1)}	\Expect_\pi[H^2(\hat p, p_{\pi})] \gtrsim \frac{\log n}{n},
	\label{eq:kim}
	\end{equation}
which differs from the upper bound in \prettyref{thm:zhang} by $\log n$. As frequently observed in the density estimation literature, such a logarithmic factor can be attributed to the fact that the analysis of the MLE is based on the global entropy bound. Thus obtaining a local version of the entropy bound in \cite{maugis2011non} can potentially close this gap and establish the sharp optimality of the NPMLE in achieving the lower bound in \prettyref{eq:kim}.	
\end{remark}


\begin{proof}[Proof of \prettyref{lmm:approx}]
	Without loss of generality, assume that $\pi$ has zero mean.
	Let $\tilde\pi$ denote the conditional version of $\pi$ on $[-a,a]$. By the data processing inequality of total variation,
	\[
	\TV(\pi*N(0,1), \tilde\pi*N(0,1)) \leq \TV(\pi, \tilde\pi) = \pi([-a,a]^c) \leq 2 e^{-a^2/2}
	\]
	where the last inequality follows from $\pi$ being 1-subgaussian.
	Next, let $\pi'$ denote the $k$-point Gauss quadrature of $\tilde\pi$, such that $\pi'$ and $\tilde \pi$ have identical first $2k-1$ moments, and $\pi'$ is also supported on $[-a,a]$. Then by moment-matching approximation (see 	\cite[Lemma 8]{WY18}), 
we have
        \[
            \chi^2(\pi'*N(0,1)\|\tilde\pi*N(0,1))\le 4 e^{a^2/2} \pth{\frac{ea^2}{2k}}^{2k}.
        \]
    %
		Using the fact that $2 \TV^2 \leq \chi^2$ and the triangle inequality, the previous two displays yield the desired bound.
\end{proof}

\begin{proof}[Proof of \prettyref{lmm:pmle}]
Let $N_{[]}(\epsilon)$ denote the bracketing number of the class of $k$-GM densities $\calP_{k,a}\triangleq \{p_{\pi}: \pi\in\calM_{k,a}\}$ with respect to the Hellinger distance.
Applying Eq.~(B.8) in \cite[Proposition B.4]{maugis2011non} (with $\alpha=Q=1$, $D(k,\alpha)=3k$, $\lambda_m=\lambda_M=1$, so that $\calI \asymp K \log a$), we have
\begin{equation}
\log N_{[]}(\epsilon) \lesssim k \log  \frac{a}{\epsilon},
 \label{eq:maugis}
\end{equation}
Next we can apply standard results on the density estimation guarantee (in Hellinger distance) for the MLE (see e.g.~\cite[Theorem $7.4$]{vandeGeer2000}). 
Define  $J(\epsilon) \triangleq \int_{\epsilon^2}^\epsilon \sqrt{\log N_{[]}(u)} du$. 
By \prettyref{eq:maugis}, we have $J(\epsilon) \lesssim \epsilon \sqrt{k \log \frac{a}{\epsilon}}$. 
Thus $\Expect[H^2(p_{\hat \pi_{k,a}}, p_{\pi})]  \lesssim \epsilon_n^2$, where $\sqrt{n} \epsilon_n^2 = J(\epsilon_n)$ so that $\epsilon_n \asymp \sqrt{\frac{k}{n} \log \frac{na^2}{k}}$.
\end{proof}

%
%
	%
	%
	%
	%
	%

\section{Discussions}
	\label{sec:discuss}
	

\subsection{Statistical degree}
\label{sec:statdegree}


In this subsection we discuss the concept of self-regularization.
Loosely speaking, an unregularized estimator can be said to achieve some form of self-regularization if it returns a density with $o(n)$ components, which improves over the worst-case upper bound of $n$.
Expanding on the reasoning in \prettyref{rmk:gaussian-tight}, below we introduce a formal framework and provide a perspective on what may be the correct model size.

%
%
%

Consider a sequence of nested statistical models $M_1\subset M_2 \subset \cdots M \subset \mathcal{M}(\mathcal{X})$, where $k$ is a parameter that encodes the ``model complexity'' of $M_k$. 
For example, in linear models, $M_k$ denotes those with $k$-sparse regression coefficients; in shape-constrained setting, $M_k$ can be the set of $k$-piecewise constant or log-affine densities; in our setting of mixture models, $M_k$ is the set of all $k$-GM densities.

Given a sample of size $n$, we define the statistical degree $K_n$ as
\begin{equation}\label{eq:def_kn}
	K_n \triangleq \inf\sth{k: d_{\max}(M, M_k) \le {1\over 3\sqrt{n}}}\,,
\end{equation}
where $d_{\max}(A,B) \triangleq \sup_{P\in A} \inf_{Q \in B} H(P,Q)$ denotes the best approximation error (in the Hellinger distance) of the model class $A$ by members of $B$. 
By definition, $K_n$ is the largest $k$ so that any density in $M$ can be made statistically indistinguishable (on the basis of $n$ observations) from some density in $M_k$; in other words, given a sample of size $n$ drawn independently from any $f \in M$, one can simulate it with probability at least $1-c$ for some constant $c$ using one drawn from some $f_k \in M_k$.
 From this simulation perspective, there is no statistical reason to fit a model of complexity bigger than $K_n$; on the other hand, it does not compromise the statistical performance (in terms of the Hellinger rate) to restrict to models of complexity at most $K_n$.
Thus, we view achieving the statistical degree $K_n$ as a criterion of self-regularization.
As shown in \prettyref{rmk:gaussian-tight}, for the class $M$ of Gaussian mixtures with subgaussian mixing distributions, we have $K_n =\Theta(\log n)$, which coincides with the typical model size fitted by the NPMLE.


Next, we discuss a simple example where the self-regularization of the unpenalized NPMLE can be established directly.
\begin{example}
\label{ex:simple}
Consider observations taking non-negative integer values in $\calX=\integers_+$.
For each $k\geq 1$, let $M_k$ denote the set of distributions supported on $\{0,\ldots,k\}$, 
and let $M$ the class of 1-subgaussian distributions on $\integers_+$. 
It is clear that the statistical degree in this case is $K_n = \Theta(\sqrt{\log n})$. Indeed, the upper bound follows from truncation and the uniform subgaussian tail, and the lower bound follows from considering an explicit distribution such as $P(j) \propto e^{-j^2}$.

Given $x_1,\ldots,x_n \iiddistr P \in M$, the NPMLE for $P$ (without enforcing the subgaussianity) is simply the empirical distribution $\hat{P}$, where 
\begin{equation}
\hat{P}(j) = \frac{1}{n} \sum_{i =1}^n \indc{x_i = j}, \quad j \in \integers_+.
\label{eq:npmle-simple}
\end{equation}
By a union bound, there exists a constant $C$, such that with probability $1-o(1)$, $\hat{P}(j)=0$ for all $j \geq k= C \sqrt{\log n}$. In other words, with high probability we automatically have $\hat{P} \in M_k$ for some $k$ that agrees with the statistical degree.
\end{example}

Note that the self-regularizing property in \prettyref{ex:simple} is a simple consequence of the explicit expression of the NPMLE in \prettyref{eq:npmle-simple}. 
In contrast, for mixture models in Theorems \ref{thm:gaussian} and \ref{thm:crit} we need to resort to the optimality condition and complex-analytic techniques, due to the lack of close-form expression of NPMLE in mixture models. Another major difference is that for mixture models $M_k$ is non-convex and hence optimizing the likelihood over $M_k$ can be expensive. 
Quite spectacularly, the full relaxation over all measures somehow automatically solves the nonconvex optimization (and for the right $k$). 

\apxonly{

Finally, we discuss the connection between statistical degree and metric entropy. 
As mentioned in \prettyref{rmk:gaussian-tight}, an upper bound on the statistical degree entails a positive result on approximation which can be shown using techniques such as moment matching. For the lower bound, in addition to proving an inapproximability result  for a specific choice of distribution in $M$, an alternative way is via metric entropy. To this end, let us define the auxiliary quantity called the entropy degree $K'_n$ as follows:
$$ K'_n \triangleq \sup\{k: N_{cov}(M, 2/\sqrt{n}) > N_{cov}(M_k, 1/\sqrt{n})\}\,,$$
where $N_{cov}(A, \epsilon) = \inf\{|S|: S\subset \mathcal{M}(\mathcal{X}), d_{\max}(A,S) \le \epsilon\}$ denotes the $\epsilon$-covering number of $A$ in Hellinger. So $K'_n$ is the $k$ for which $M_k$ has roughly the same entropy numbers (at scale $1/\sqrt{n}$) as the full model $M$.

The next result shows a one-sided bound but we expect $K_n \asymp K'_n$ in most natural models.
\begin{lemma} We always have $K_n \ge K'_n$.
\end{lemma}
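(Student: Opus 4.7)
The plan is a one-line triangle-inequality argument on $\epsilon$-covers: I will rule out every $k \ge K_n$ from the set whose supremum defines $K'_n$, thereby forcing $K'_n < K_n$. We may assume $K_n < \infty$, else the claim is trivial.

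First, I would invoke the definition~\eqref{eq:def_kn}: taking $k = K_n$, we have $d_{\max}(M, M_k) \le 1/(3\sqrt{n})$, so every $P \in M$ admits some $Q \in M_k$ with $H(P, Q) \le 1/(3\sqrt{n})$. Next, let $S \subset \mathcal{M}(\mathcal{X})$ be an optimal $(1/\sqrt{n})$-Hellinger cover of $M_k$, so that $|S| = N_{cov}(M_k, 1/\sqrt{n})$; pick $R \in S$ within Hellinger distance $1/\sqrt{n}$ of $Q$. The triangle inequality for the Hellinger metric then gives
\[
H(P, R) \le H(P, Q) + H(Q, R) \le \frac{1}{3\sqrt{n}} + \frac{1}{\sqrt{n}} \le \frac{2}{\sqrt{n}},
\]
so $S$ is a $(2/\sqrt{n})$-cover of $M$, yielding $N_{cov}(M, 2/\sqrt{n}) \le N_{cov}(M_{K_n}, 1/\sqrt{n})$.

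Third, I would extend the inequality from $k = K_n$ to every $k \ge K_n$. Under the standing nested-family assumption $M_1 \subset M_2 \subset \cdots$ together with the elementary monotonicity $N_{cov}(A, \epsilon) \le N_{cov}(B, \epsilon)$ for $A \subset B$ (any cover of the larger set automatically covers the smaller), we obtain $N_{cov}(M, 2/\sqrt{n}) \le N_{cov}(M_k, 1/\sqrt{n})$ for all $k \ge K_n$. Hence no such $k$ lies in the set whose supremum defines $K'_n$, and therefore $K'_n \le K_n - 1 < K_n$, as claimed.

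There is no substantive obstacle: the proof is essentially one application of the triangle inequality combined with the trivial monotonicity of covering numbers under set inclusion. The only point worth flagging is the need to bound the supremum over all $k$ rather than a single value, which the nested-family hypothesis disposes of immediately.
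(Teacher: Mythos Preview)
Your argument is correct and is essentially the same triangle-inequality idea as the paper's, just run from the opposite end: the paper fixes $k=K'_n$, takes an optimal $1/\sqrt{n}$-cover $S$ of $M_{K'_n}$, and observes that $S$ can only be a $2/\sqrt{n}$-cover of $\{P\in M: d(P,M_{K'_n})\le 1/\sqrt{n}\}$; since $|S|<N_{cov}(M,2/\sqrt{n})$ by the definition of $K'_n$, some $P\in M$ must lie at Hellinger distance $>1/\sqrt{n}$ from $M_{K'_n}$, forcing $d_{\max}(M,M_{K'_n})\ge 1/\sqrt{n}>1/(3\sqrt{n})$ and hence $K_n> K'_n$. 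You instead fix $k=K_n$, lift a $1/\sqrt{n}$-cover of $M_{K_n}$ to a $2/\sqrt{n}$-cover of $M$ via $d_{\max}(M,M_{K_n})\le 1/(3\sqrt{n})$, and then propagate to all $k\ge K_n$ by nestedness. Both routes hinge on the same one-line Hellinger triangle inequality and the monotonicity coming from $M_1\subset M_2\subset\cdots$; neither buys anything the other does not.
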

\begin{proof}
	Consider any set $S$ achieving the infimum in the definition of $N_{cov}(M_k,1/\sqrt{n})$ for $k=K'_n$. 
	By the triangle inequality this set $S$ is also a ${2\over \sqrt{n}}$-covering of the set $V=\{P \in M:
	\inf_{Q\in M_k}H(P, Q) \le 1/\sqrt{n}\}$. However, by the definition of $K'_n$ the set $S$ is too small to
	provide a ${2\over \sqrt{n}}$-covering of $M$, and thus there must exist some $P \in M \setminus V$. In turn,
	this implies $d_{\max}(M,M_k) \ge {1\over \sqrt{n}}$.
\end{proof}

}

\apxonly{
\paragraph{When does self-regularization fail?}
Surprisingly, we were not able to find a natural example of a sequence of models $M_k \subset M$ with the property that NPMLE
selects a distribution $\NPMLEpi \in M_{\hat k}$ with $\hat k \gg K_n$. However, for the example of $\hat k \ll K_n$ we
can consider the following. Let $M$ be the set of all bounded monotone densities on $[0,1]$ and $M_k$ be the subset of
$k$-piecewise-constant densities. In this case, it is known that $\hat k \lesssim n^{1/3}$, whereas considering $\pi(x)
= x^2$ we can show $K_n \gtrsim n^{1/2}$. (However, notice that the minimax rate of density estimation in Hellinger over
the class $M$ should be $\asymp n^{-1/3}$, which means this mismatch would disappear had we defined $K_n$ using
the minimax rate of $n^{-1/3}$ \cite{birge1989grenader} in~\eqref{eq:def_kn} instead of the $n^{-1/2}$ that required statistical
indistinguishability.)
}

\subsection{Self-regularization for mixtures of exponentials}
\label{sec:discuss-fail}


Although we have not identified an example of a mixture model where the number of atoms of NPMLE is $\omega(\log n)$, the program of analyzing the NPMLE in \prettyref{thm:crit} and \prettyref{thm:logn} does have its limitations.
As a leading example, let us consider mixtures of exponential distributions, which is among the earliest results on the structure of NPMLE \cite{Jewell1982} (see also \cite[Sec.~2.1]{groeneboom1992information}). Since the tail is exponential, this model is outside the scope of \prettyref{thm:logn}.

\begin{example}[Exponential mixture]
\label{ex:exp-mixture}
Consider the exponential distribution $\Exp(\theta)$ with density $p_\theta(x)=\theta e^{-\theta x} \indc{x>0}$ and $\theta > 0$. 
In this case, the NPMLE is defined as 
\begin{equation}
\NPMLEpi =  \arg\max_{\pi \in \calM(\reals_+)} \frac{1}{n} \sum_{i=1}^n \log p_\pi(x_i), \quad p_\pi(x) = \int \theta e^{-\theta x} \pi(d\theta).
\label{eq:NPMLE-exp}
\end{equation}
Upon normalization, the gradient \prettyref{eq:kkt} is proportional to the function
\begin{equation}
F(\theta) = \sum_{i=1}^n w_i \theta e^{-\theta x_i},
\label{eq:F-expmix}
\end{equation}
 where $\sum_{i=1}^n w_i=1$ and $w_i\geq 0$.
Thus the atoms of the NPMLE are roots of 
$F'(\theta) = \sum_{i=1}^n w_i e^{-\theta x_i}(1-\theta x_i)$, which are localized in the interval $[a,b]$ with $a=1/\xmax$ and $b=1/\xmin$.
Following the proof of \prettyref{thm:crit}, to bound the number of roots of $F'$, we can apply \prettyref{lmm:countz} to $f(\theta) = F'(\theta - \frac{a+b}{2})$ and $r=\frac{b-a}{2}$. Choose $r_2=\frac{a+b}{2}$ and $r_1=2r=b-a$. Since $f(r_2)=F'(0)=1$, we have $M_f(r_2) \geq 1$. Moreover, it is clear that $M_f(r_1) \leq \exp(C b \xmax)$ for some constant $C$. Thus an application of \prettyref{lmm:countz} shows that 
\begin{equation}
|\supp(\NPMLEpi)| \lesssim \frac{\xmax}{\xmin}. 
\label{eq:npmle-exp}
\end{equation}

However, in the stochastic setting the above bound is too loose to be useful. Indeed, suppose $x_1,\ldots,x_n$ are drawn independently from a single exponential distribution, say, $\Exp(1)$. Then with high probability, we have $\xmin = \Theta_P(\frac{1}{n})$ and 
$\xmax = \Theta_P(\log n)$.
Thus \prettyref{eq:npmle-exp} yields $|\NPMLEpi| = O(n \log n)$, which is even worse than the deterministic bound of $|\NPMLEpi|\leq n$.
Clearly, the culprit of this looseness stems from the fact that the data-generating distribution is supported on $\reals_+$ which has a boundary at zero.
Since the smallest observation will be on the order of $\frac{1}{n}$, \emph{a priori} one can only localize the atoms of the NPMLE in an interval of width $\Theta(n)$, which is much worse than $\Theta(\sqrt{\log n})$ in the Gaussian model.
Similar problems also arise in other distributions whose support has boundary points, such as Gamma or Beta families.

\textit{Open question:} Given $X_i \iiddistr p_\pi$ where $\supp(\pi)\subset[1,2]$, prove that with probability $1-o(1)$, we have
	\begin{equation}\label{eq:conj_expon}
		|\supp(\NPMLEpi)| = \Theta(\log n)  
\end{equation}
The crucial $O(\log n)$ upper bound would follow from the following analytic \textit{conjecture:} For any distribution $\pi$ on $[-a,a]$ the convolution $(\pi * h)(x) \eqdef \int h(x-y) \pi(dy)$ has at most $O(a)$ critical points, where $h(x) = e^{-e^{x}+x}$ is the density of a Gompertz distribution.
\end{example}


\subsection{Compactly supported NPMLE}
\label{sec:discuss-compact}

So far we have focused on unconstrained NPMLE, where the likelihood is maximized over all mixing distributions. In case where one has extra knowledge such as compact support, moment constraint, or sparsity, these information can be incorporated into the optimization problem as linear constraints leading to potentially improved statistical performance.
This begs the question: to what extent does constraint help the self-regularization of the NPMLE. Specifically,
\begin{enumerate}
	\item If the unconstrained solution fails to self-regularize, does adding constraints make it so? 
	\item If the unconstrained solution is already self-regularizing, does adding constraints make it more so?
\end{enumerate}
We briefly discuss these two aspects below.

For the first problem, let us continue \prettyref{ex:exp-mixture} on exponential mixtures, where we pointed out that the
program in \prettyref{thm:crit} does not resolve the self-regularization of unconstrained NPMLE. Nevertheless, it is
easy to show that adding a support constraint to NPMLE does resolve conjecture~\eqref{eq:conj_expon}. Indeed, suppose that the parameter $\theta$ is bounded from above by some constant $\theta_0$, in which case one can consider the following support-constrained version of \prettyref{eq:NPMLE-exp}:
\begin{equation}
\NPMLEpi' =  \arg\max_{\pi \in \calM([0,\theta_0])} \frac{1}{n} \sum_{i=1}^n \log p_\pi(x_i).
\label{eq:NPMLE-exp1}
\end{equation}
Thanks to the constraint, we only need to count the number of critical points of \prettyref{eq:F-expmix} in the interval
$[0,\theta_0]$. Applying the same argument in \prettyref{ex:exp-mixture} now with $r=\theta_0/2=O(1)$ yields
$|\NPMLEpi'| \lesssim \xmax = O_P(\log n)$. Note that using moment matching and Taylor expansion we can show that the
statistical degree for exponential mixtures with parameters bounded away from zero and infinity (say, $\supp(\pi)\subset[1,2]$) is $O(\log n)$. We \textit{conjecture} that the
statistical degree $K_n$ in this case is $\Theta(\log n)$ and if so, the argument above shows that $\NPMLEpi'$ does
self-regularize.

For the second problem, let us revisit the Gaussian location mixture. Suppose the mixing distribution is supported on a compact interval, say, $[-1,1]$. \prettyref{thm:gaussian} shows that the unconstrained NPMLE is $O(\log n)$-atomic with high probability. However, when the mixing distribution is compactly supported, the moment-matching argument in 
\cite[Lemma 8]{WY18} shows that the statistical degree in fact reduces to $O(\frac{\log n}{\log\log n})$. Again, we
\textit{conjecture} that in this case $K_n \asymp {\log n \over \log \log n}$. Then a natural question is whether NPMLE
with support constraint $\NPMLEpi'$ defined as in~\eqref{eq:NPMLE-exp1} with maximization over $\{\pi: \supp(\pi) \in
[-1,1]\}$ achieves a better self-regularization of $O(\frac{\log n}{\log\log n})$ atoms.\footnote{It would
be even more spectacular if the unconstrained NPMLE achieved the same number of atoms, but we are not willing to
conjecture this.}

The main bottleneck of proving this is the following. Note that similar to the proof of
Theorem~\ref{thm:crit} we can reduce to the problem of counting the critical point of \prettyref{eq:F}, which for Gaussian model simplifies to
\begin{equation}
F(\theta) = \sum_{i=1}^n w_i \varphi(\theta-x_i), \quad w_i \propto \frac{1}{(\NPMLEpi' * \varphi)(x_i)}.
\label{eq:F-gaussian}
\end{equation}
However this time we are not interested in bounding the number of all critical points of $F$, but only those in $[-1,1]$. 
Thus, we can set $r=1$, $r_1 \asymp \sqrt{\log n}$ in the application of Lemma~\ref{lmm:countz}. The issue is with
setting $r_2$. If we could show that $F$ must have at least one point $z_0$ inside a disk of radius $O(1)$ such that
\begin{equation}\label{eq:Fconj}
	|F'(z_0)| > n^{-C}  
\end{equation}for some $C$ (with high probability), then invoking \prettyref{lmm:countz} with $r_2 = O(1)$ would conclude
that $F'$ has at most $O(\frac{\log n}{\log\log n})$ roots inside the unit disk. It is tempting to conjecture further that
$z_0$ satisfying~\eqref{eq:Fconj} exists for arbitrary choice of $\{w_i,x_i\}_{i=1}^n$, s.t. $|x_i| \lesssim
\sqrt{\log n}$.
Alas, this stronger conjecture does not hold as~\cite[Section 2]{PW20inapprox} constructs $\{w_i,x_i\}_{i=1}^{O(\log n)}$ such that
$|F'(z)|\leq n^{-C \log \log n}$ for all $|z| = O(1)$. 
Therefore unlike the proof of \prettyref{thm:gaussian}, here we cannot ignore the stochastic origin of $x_i$ and that
$w_i$ is inversely proportional to the fitted likelihood at $x_i$ (see \prettyref{eq:F-gaussian}). Since $\NPMLEpi'$ itself is random, proving this property of $G(z)$ seems to require a delicate analysis of 
``small-ball'' probabilities of the empirical process. This is left for future work.

\subsection{Maxima of Gaussian mixtures}\label{sec:discuss-gmmaxima}

In the special case of the Gaussian location mixture,
\prettyref{thm:crit} translates to the following statement on the Gaussian convolution:
For any distribution $\pi$ supported on the interval $[-a,a]$, the convolution $\pi*\varphi$ has at most $O(a^2)$ critical points.
This result has been shown independently in the recent work \cite[Theorem 6]{dysto2020} by similar techniques using a
corollary of Jensen's formula from \cite{tijdeman71}. Can this bound be improved? The answer is negative and we next
give a simple construction of a Gaussian mixture with $\Omega(a^2)$ local maxima.\footnote{A different construction
using $\Omega(a^2)$ equally weighted and equally spaced Gaussians is given in the independent work \cite{KK20} in
response to a conjecture of authors of~\cite{dysto2020}, cf. \textit{arxiv:1901.03264v4}, that their bound can be
improved to $O(A)$.}

\apxonly{The optimality of the $O(a^2)$ bound can also be deduced from existing
information-theoretic results. Indeed, in the problem of maximizing capacity of the additive white-Gaussian noise
channel over inputs $X \in [-a;a]$ (the amplitude constraint), it is known that the optimal distribution $P_X^*$ is discrete with atoms
located at the global maxima of a certain Gaussian convolution (TODO: this is correct, but mixing distribution is
non-compactly supported. Otherwise we can relate number of atoms to number of level-crossings of the mixture-PDF). Then combining the amplitude-constrained capacity lower
bound \cite{PW13} and the cardinality-constrained capacity upper bound \cite{WV2010} for the Gaussian channel, we
conclude that the number of atoms of $P_X^*$ is $\Omega(a^2)$.}

%
%

\begin{lemma}
\label{lmm:sinusoid} Let $h$ be a continuous probability density on $\reals$ with characteristic function $\hat h$ and CDF $H$. Suppose we have $\omega_0$
and $a>0$ such that
	\begin{equation}
	|\hat h(\omega_0)| > 2(H(-a/2) + 1-H(a/2))\,.
	\label{eq:sinusoid-ass}
	\end{equation}
	Let $\pi(x) = c (1+\sin(\omega_0 x)) 1\{|x| \le a\}$ with $c>0$ chosen to make $\pi$ a probability density. Then $h*\pi$
has at least $\omega_0 a\over 2\pi$ local maxima on $[-a/2,a/2]$.
\end{lemma}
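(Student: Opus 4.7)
}
The plan is to decompose $\pi$ into its ``constant'' and ``sinusoidal'' parts, convolve each with $h$, and show that the sinusoidal part dominates the error coming from truncation of $\pi$ to $[-a,a]$. Writing
\[
(h*\pi)(x)/c = \int_{-a}^{a} h(x-y)\,dy + \int_{-a}^{a} h(x-y)\sin(\omega_0 y)\,dy,
\]
the plan is to compare each integral to its extension over all of $\reals$. The first one extended to $\reals$ equals $1$; for the second one, a change of variables gives $\int_{\reals} h(x-y)\sin(\omega_0 y)\,dy = |\hat h(\omega_0)|\sin(\omega_0 x + \phi)$ for some phase $\phi\in[0,2\pi)$ depending only on $\arg \hat h(\omega_0)$.

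For $x\in[-a/2,a/2]$ and $|y|\ge a$, one has $|x-y|\ge a/2$, so the omitted tails in both integrals are bounded, in absolute value, by $\epsilon \eqdef H(-a/2) + 1 - H(a/2)$. Assembling these estimates yields
\[
(h*\pi)(x)/c = 1 + |\hat h(\omega_0)|\sin(\omega_0 x + \phi) + e(x),\qquad |e(x)|\le 2\epsilon,
\]
valid uniformly on $[-a/2,a/2]$. By hypothesis~\eqref{eq:sinusoid-ass}, $|\hat h(\omega_0)| > 2\epsilon$, so the sinusoidal term strictly dominates the error term.

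The remaining task is to convert this strict domination into a count of local maxima. Let $y_k = (\pi/2-\phi+2k\pi)/\omega_0$ denote the peaks of $\sin(\omega_0 x+\phi)$ and $z_k = (-\pi/2-\phi+2k\pi)/\omega_0$ its valleys, so that consecutive $z_k$'s are spaced $2\pi/\omega_0$ apart with exactly one $y_k$ between them. At each peak, $(h*\pi)(y_k)/c \ge 1+|\hat h(\omega_0)|-2\epsilon$, while at each valley $(h*\pi)(z_k)/c \le 1-|\hat h(\omega_0)|+2\epsilon$; since $|\hat h(\omega_0)|>2\epsilon$, every peak value strictly exceeds every valley value. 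By continuity, on each interval $[z_k,z_{k+1}]\subset[-a/2,a/2]$ the maximum of $h*\pi$ is attained at an interior point, producing a local maximum. Counting the number of disjoint such intervals fitting inside $[-a/2,a/2]$ gives at least $\lfloor \omega_0 a/(2\pi)\rfloor$ local maxima, which is the claimed bound.

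The main obstacle is the tail estimate in step two; once the decomposition and the bound $|e(x)|\le 2\epsilon$ are in hand, the counting argument is elementary. A minor bookkeeping issue is matching the floor function to the ``at least $\omega_0 a/(2\pi)$'' phrasing, but this is a standard off-by-one technicality.
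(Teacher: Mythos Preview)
Your proposal is correct and follows essentially the same route as the paper: compare $h*\pi$ to the untruncated convolution $h*(1+\sin(\omega_0\,\cdot\,))$, bound the truncation error on $[-a/2,a/2]$ by $2(H(-a/2)+1-H(a/2))$ via the tail of $h$, and then count oscillations. The only cosmetic differences are that the paper exploits the one-sided inequality $\Delta\ge 0$ and phrases the final count via level-crossings rather than interior maxima on $[z_k,z_{k+1}]$; neither affects the argument or the constants.
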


\begin{proof} Let $\pi_0(x) = 1+\sin(\omega_0 x)$. Then $0 \leq \pi_0  \leq 0$. Then $(\pi_0 * h)(x) = |\hat h(\omega_0)| \sin(\omega_0 x -
\arg \hat h(\omega_0)) + 1$, which is a shifted and scaled sinusoid.
 Let $S_+$ and $S_-$ be the sets of global maxima and minima of $\pi_0*h$ (which are lattices with
step $2\pi \over \omega_0$). Let $\pi_1(x) = \pi_0(x) 1\{|x| \le a\}$. Define 
$\Delta \triangleq h*\pi_0 - h*\pi_1$. Then $\Delta \geq 0$ everywhere. Furthermore, 
\begin{equation}
\Delta(x)  = \int_{|y|>a} \pi_0(y) dh(x-y) \le 2(H(x-a) + 1-H(x+a))\,.
\label{eq:truncation}
\end{equation}
Thus, by assumption \prettyref{eq:sinusoid-ass}, for any $|x| \le a/2$ we have $\Delta(x) \le |\hat h(\omega_0)|$. Consequently, for any $x\in S_+ \cap [-a/2,a/2]$
we have $(\pi_1*h)(x) = (\pi_0*h)(x) - \Delta(x) > 1$ and for any $x \in S_- \cap [-a/2, a/2]$ we have $(\pi_1*h)(x) < 1-|\hat
h(\omega_0)|$. Thus, the level $1-{1\over2}|\hat h(	_0)|$ must be crossed in between any two consecutive points
from $S_+$ and $S_-$, implying the statement.
\end{proof}

\begin{coro}
\label{cor:gaussian-modes}
 There exists a compactly supported density $\pi$ on $[-a,a]$ so that $\pi*\varphi$ has $\Omega(a^2)$
local maxima on $[-a/2,a/2]$.
\end{coro}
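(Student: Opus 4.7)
The plan is to apply Lemma~\ref{lmm:sinusoid} with $h=\varphi$, the standard Gaussian density, so that $H=\Phi$ and $\hat h(\omega) = e^{-\omega^2/2}$. Under this choice, the construction $\pi(x) \propto (1+\sin(\omega_0 x))\indc{|x|\le a}$ automatically yields a compactly supported probability density on $[-a,a]$, and the output of the lemma is precisely a count of $\Omega(\omega_0 a)$ local maxima of $\pi*\varphi$ on $[-a/2,a/2]$. Hence it suffices to choose $\omega_0$ linear in $a$ (say $\omega_0 = a/3$) and verify the hypothesis~\eqref{eq:sinusoid-ass} for all sufficiently large $a$.

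The verification is a standard Gaussian tail estimate: by the classical bound $1-\Phi(t)\le e^{-t^2/2}$ for $t>0$, the right-hand side of~\eqref{eq:sinusoid-ass} is at most $4 e^{-a^2/8}$, while the left-hand side equals $e^{-\omega_0^2/2}$. Taking $\omega_0 = a/3$ gives $e^{-\omega_0^2/2} = e^{-a^2/18}$, which exceeds $4 e^{-a^2/8}$ once $a$ is larger than an absolute constant. With this choice, Lemma~\ref{lmm:sinusoid} produces at least $\omega_0 a/(2\pi) = a^2/(6\pi)$ local maxima of $\pi*\varphi$ on $[-a/2,a/2]$, which is the desired $\Omega(a^2)$ bound. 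For small $a$ the statement is vacuous (by adjusting the implicit constant), so the argument is complete.

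I do not anticipate a serious obstacle here: the only nontrivial ingredient is Lemma~\ref{lmm:sinusoid}, and once it is granted the corollary reduces to balancing the oscillation frequency $\omega_0$ against the Gaussian tail. The slight subtlety is to ensure that the constant in front of $a^2$ inside the exponential on the tail side ($1/8$) is strictly larger than the one on the characteristic-function side ($\omega_0^2/(2a^2) = 1/18$ for $\omega_0 = a/3$), which is what allows us to absorb the factor of $4$ and the constraint on $a$.
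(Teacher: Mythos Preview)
Your proof is correct and follows essentially the same approach as the paper's own argument: apply Lemma~\ref{lmm:sinusoid} with $h=\varphi$, bound the Gaussian tail by $2e^{-a^2/8}$, and pick $\omega_0$ proportional to $a$ so that $e^{-\omega_0^2/2}$ dominates $4e^{-a^2/8}$ for large $a$. The only cosmetic difference is that the paper takes $\omega_0=a/4$ while you take $\omega_0=a/3$; either choice satisfies $\omega_0^2/2 < a^2/8$ and yields the desired $\Omega(a^2)$ count.
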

\begin{proof}
	By the Gaussian tail bound, we have $H(-a/2) + 1-H(a/2) \leq 2 e^{-a^2/8}$. Choosing $\omega_0=a/4$, the claim follows from \prettyref{lmm:sinusoid} for sufficiently large $a$.
\end{proof}


\subsection{Mixture of log-concave densities}
\label{sec:discuss-logconcave}

Consider the following question: Given a convex combination of $k$ unimodal densities, how many modes can it have? A moment of thought shows that 
the answer is trivial as the sum of two unimodal densities,
e.g.~$f(x)+f(x-1)$ with 
\begin{equation}
f(x)=(1-|x|) \indc{|x|\leq 1},
\label{eq:unimodal}
\end{equation}
 can have infinitely many modes; the same example also applies even if unimodality is replaced by log-concavity. 
A natural question is what happens to strongly log-concave densities.\footnote{Recall that (cf.~\cite[Definition 2.9]{saumard2014log}) a density $f$ is called $c$-strongly log-concave strongly convex if $\log f$ is strongly concave, i.e., $\log f((1-\alpha)x + \alpha y) \geq (1-\alpha) \log f(x)+\alpha \log f(y) + \frac{c}{2} \alpha(1-\alpha) \|x-y\|_2^2$ for all $x,y$ and all $\alpha\in[0,1]$ for some constant $c>0$  In the case of twice-differentiable $h$, this is equivalent to $\nabla^2 (\log f) \preceq -c I$.}
By replacing \prettyref{eq:unimodal} with
	$$f(x) = \begin{cases} 0, & |x| > 1, \\
			1-|x|, &\epsilon<|x|\le 1 \\
			-x^2/(2\epsilon) + 1 - \epsilon /2, &|x|\le \epsilon
		\end{cases} $$
which is strongly log-concave, we again see that $f(x) + f(x-1)$ can have a flat piece. Furthermore, it is possible to construct  infinitely differentiable $f$ (by convolving with a mollifier) with the same property; however, such a density is not analytic. Thus, we ask the question:
\begin{quote}
\it 
Given a convex combination of $k$ analytic densities that are strongly log-concave, how many modes can it have?
\end{quote}
The following result gives an $\Omega(k^2)$ lower bound. Whether this is tight is an open question.

\begin{coro}
\label{cor:logconcave}	
	There exist strongly log-concave analytic densities $f_1,\ldots,f_k$ on $\reals$ and weights $\alpha_1,\ldots,\alpha_k$ such that $\alpha_1 f_1+\ldots+\alpha_k f_k$ has $\Omega(k^2)$ local maxima.
\end{coro}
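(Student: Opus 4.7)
The plan is to deduce the corollary directly from Corollary~\ref{cor:gaussian-modes} by a coarse-graining of the mixing density produced there. Set $a = k/2$ and let $\pi$ be the density on $[-a,a]$ supplied by Corollary~\ref{cor:gaussian-modes}, so that $\rho \eqdef \pi * \varphi$ has at least $\Omega(a^2) = \Omega(k^2)$ local maxima on $[-a/2, a/2]$. Partition $[-a, a]$ into $k$ disjoint intervals $I_1,\ldots,I_k$ of length $1$, define $\alpha_i = \int_{I_i}\pi$ and $\pi_i \eqdef \alpha_i^{-1} \pi \,\mathbf{1}_{I_i}$ (note $\alpha_i>0$ since $1+\sin(\omega_0 \cdot)>0$ almost everywhere), and set $f_i \eqdef \pi_i * \varphi$. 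Then the identity $\rho = \sum_{i=1}^k \alpha_i f_i$ exhibits $\rho$ as a convex combination of $k$ probability densities with $\Omega(k^2)$ local maxima, so it suffices to verify that each $f_i$ is strongly log-concave and analytic.

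Analyticity is immediate: $\pi_i$ has compact support and $\varphi$ is entire with Gaussian decay of all derivatives, hence $f_i(z) = \int \pi_i(y)\, \varphi(z-y)\, dy$ extends to an entire function on $\complex$. For strong log-concavity, I would differentiate $\log f_i$ twice under the integral sign (using $(\log\varphi)'' \equiv -1$) to obtain the classical ``posterior variance'' identity
\[
(\log f_i)''(x) \;=\; -1 + \Var(Y \mid x),
\]
where $Y$ has the tilted density $y \mapsto \varphi(x-y)\pi_i(y)/f_i(x)$. Crucially, this identity requires no hypothesis on $\pi_i$ beyond being a probability density. Since the tilted law is supported in $I_i$, an interval of length $1$, its variance is at most $1/4$, so $(\log f_i)''(x) \leq -3/4$ uniformly in $x \in \reals$, proving that each $f_i$ is $(3/4)$-strongly log-concave.

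The main subtlety, and the reason the argument is not entirely routine, is that one cannot simply invoke Pr\'ekopa's theorem on convolution preserving log-concavity: the function $\pi_i$ is the restriction of $c(1+\sin(\omega_0 \cdot))$ to a length-$1$ window, and because $\omega_0 = a/4$ is large, this window contains many zeros of $1+\sin$; in particular $\pi_i$ is very far from being log-concave. The posterior-variance identity sidesteps this issue by exploiting only the strong log-concavity of $\varphi$ itself (curvature $-1$) against the short support of $\pi_i$, so that the contribution $\Var(Y\mid x) \le 1/4$ cannot overwhelm it. All implicit constants in the $\Omega(k^2)$ bound are inherited from Corollary~\ref{cor:gaussian-modes} and are therefore absolute.
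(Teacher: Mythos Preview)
Your proof is correct and follows essentially the same route as the paper: partition the support of the mixing density from \prettyref{cor:gaussian-modes} into $\Theta(k)$ short subintervals, condition $\pi$ on each to obtain $\pi_i$, and use the posterior-variance identity $(\log(\pi_i*\varphi))''(x) = -1 + \Var(Y\mid x)$ together with the trivial variance bound for compactly supported laws to deduce strong log-concavity of each $f_i=\pi_i*\varphi$. The only cosmetic difference is the interval length---you use length $1$ (yielding $(\log f_i)''\le -3/4$), whereas the paper uses length $1/4$; your choice is in fact the one consistent with the $3/4$ constant stated there.
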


\begin{proof}
Take the $\pi$ supported on $[-a,a]$ from \prettyref{cor:gaussian-modes}. Partition $[-a,a]$ into $k=4a$ consecutive intervals $I_1,\ldots,I_k$ of length $1/4$. Let $\pi_i$ denote the conditional version of $\pi$ on $I_i$ and set $\alpha_i=\pi(I_i)$. 
Recall the fact that $(\log (\mu * \varphi))'' \geq 1 - b^2$ for any probability measure $\mu$ supported on an interval of length $2b$; 
this follows from the well-known identity $(\log (\mu * \varphi))''(y) = 1 - \Var(X|X+Z=y)$, where $X\sim\mu$ and $Z \sim N(0,1)$ are independent.
Then $f_i \triangleq \pi_i*\varphi$ is strongly log-concave satisfying $(\log f_i)''\geq 3/4$. 
Since $\pi * \varphi= \sum_{i=1}^k \alpha_i f_i$, the desired conclusion then follows from  \prettyref{cor:gaussian-modes}.
\end{proof}

\subsection{Further open problems}
	\label{sec:discuss-open}

	In addition to those on exponential mixtures and constrained NPMLE mentioned in \prettyref{sec:discuss-compact} and \prettyref{sec:discuss-fail}, we end the paper by describing  some further open problems on the structure of NPMLE:

	\subsubsection{Lower bound for NPMLE}
	A particular consequence of \prettyref{thm:gaussian} is the following: when the sample are generated from a finite Gaussian mixture, say, $N(0,1)$, with high probability the NPMLE outputs a Gaussian mixture with at most $O(\log n)$ components. 
	To understand the NPMLE from the perspective of overparameterization, it is of great interest to determine whether this bound is tight.
	(Note that the reasoning in \prettyref{rmk:gaussian-tight} only shows that this is tight when the true density is $N(0,\sigma^2)$ for any $\sigma^2>1$.)
	If so, it would show that the unpenalized NPMLE indeed selects a slightly inflated model (at the price of being fully automatic) and the model selection criterion, such as BIC \cite{leroux1992consistent,keribin2000consistent}, is genuinely needed for achieving consistency in estimating the order of the mixture.
	
	As mentioned in \prettyref{sec:intro}, such lower bound is known to hold for the Grenander estimator (NPMLE for monotone density): 
	if the true density $f$ is uniform, then the number of pieces in the Grenander estimator is asymptotically $N(\log n,\log n)$ \cite{groeneboom1993isotonic}. This is a direct consequence of a celebrated result of Sparre Andersen on the least concave majorant of empirical CDF \cite{andersen1954fluctuations,groeneboom2020}, whose discontinuity in slope correspond to the atoms of Grenander estimator. For the NPMLE in mixture models, no such simple characterization is known other than the first-order condition \prettyref{eq:kkt}.

	%
	%

	\subsubsection{Multivariate models}
	Compared to the univariate case, the structure of the NPMLE is far less well understood for multivariate models. Indeed, the general theory developed in \cite{Lindsay1995}	relies on the parameter space being one-dimensional.	
	For instance, for the simplest Gaussian location mixture, even the uniqueness of the solution is open in dimension $d\geq 2$.
Similar to the analysis in the current paper, bounding the number of atoms in the NPMLE boils down to counting the critical points of a Gaussian mixture \prettyref{eq:F-gaussian} 
with centers being the individual observations, which, if drawn from a subgaussian distribution, lie in a hypercube of size $O(\sqrt{\log n})$ with high probability. The construction in \cite[Proposition 3]{KK20} shows that there exists a mixing distribution on $[-a,a]^d$ whose Gaussian location mixture has $\Omega(a^{2d})$ modes. However, it is unclear whether this is tight and directly extending the complex-analytic technique in this paper to multiple dimensions appears challenging. 

	On the other hand, although the uniqueness of the NPMLE is not settled, the usual analysis of maximal likelihood (zeroth-order optimality) yields statistical guarantees that apply to any solution of the NPMLE \cite{dicker2016high,saha2020nonparametric}. For example, extending the work of \cite{zhang2009generalized}, \cite[Corollary 2.2]{saha2020nonparametric} showed that if the mixing distribution is compactly supported, then the estimated mixture density has squared Hellinger accuracy of $O_d( (\log n)^{d+1}/n)$. 
	
	In view of the above results, we conjecture that the solution to the NPMLE for multivariate Gaussian mixtures is unique and, furthermore, given a subgaussian sample of size $n$ it is typically $(\log n)^{C(d)}$-atomic when the dimension $d$ is not too big.

	\subsubsection{Log-concave NPMLE}
			The NPMLE for log-concave densities is well-studied in nonparametric statistics literature. 
			Basic properties (such as the almost sure existence and uniqueness) and computational algorithms are obtained in \cite{pal2007estimating,dumbgen2009maximum} in one dimension and extended to multiple dimensions \cite{cule2010maximum}. In particular, similar to the NPMLE for monotone density (Grenander estimator) which is piecewise constant, the logarithmic of the NPMLE for log-concave density is piecewise affine with at most $n$ pieces; however, unlike the Grenander estimator, its typical structure (e.g.~the number of pieces) is little understood, partly because the optimal condition is more complicated.


In terms of statistical results, in one dimension the minimax squared Hellinger rate is shown to be $\Theta(n^{-4/5})$ \cite{doss2016global,kim2016global}. 
For dimension $d\geq 2$, \cite{kim2016global} proved the minimax lower bound $\Omega(n^{-2/(d+1)})$ and showed it can be attained by the NPMLE up to logarithmic factors for $d=2$ and $3$. This near-optimality of NPMLE is recently extended to any dimension in \cite{kur2019optimality,han2019global}.
	In view of the corresponding results for the Grenander estimator, if one interprets the minimax rate as the effective dimension divided by the sample size, it is reasonable to conjecture that the typical number of pieces in the log-concave NPMLE is $O(n^{1/5})$ and $O(n^{(d-1)/(d+1)})$ for $d\geq 2$.

\section*{Acknowledgment}
This work was partially completed when the authors were visiting the Information Processing Group at the School of Computer and Communication Sciences of EPFL, 
whose generous support is gratefully acknowledged and whose seminar, canceled due to COVID-19, nevertheless brought the independent work \cite{dysto2020} to our attention.
The authors thank Pengkun Yang for helpful discussions at the onset of the project and for informing us \cite{KK20}.
The authors are also grateful to Roger Koenker for helpful discussion on \cite{koenker2019comment} and providing numerical simulation.

Y.~Wu is supported in part by the NSF Grant CCF-1900507, NSF CAREER award CCF-1651588, and an Alfred Sloan
fellowship. Y.~Polyanskiy is supported in part by the Center for Science of Information (CSoI), an NSF Science and Technology Center, under grant agreement CCF-09-39370, and 
the MIT-IBM Watson AI Lab.



\begin{thebibliography}{DYPS20}

\bibitem[Bir89]{birge1989grenader}
Lucien Birg\'e.
\newblock The {G}renader estimator: A nonasymptotic approach.
\newblock {\em The Annals of Statistics}, pages 1532--1549, 1989.

\bibitem[CSS10]{cule2010maximum}
Madeleine Cule, Richard Samworth, and Michael Stewart.
\newblock Maximum likelihood estimation of a multi-dimensional log-concave
  density.
\newblock {\em Journal of the Royal Statistical Society: Series B (Statistical
  Methodology)}, 72(5):545--607, 2010.

\bibitem[DR09]{dumbgen2009maximum}
Lutz D{\"u}mbgen and Kaspar Rufibach.
\newblock Maximum likelihood estimation of a log-concave density and its
  distribution function: Basic properties and uniform consistency.
\newblock {\em Bernoulli}, 15(1):40--68, 2009.

\bibitem[DW16]{doss2016global}
Charles~R Doss and Jon~A Wellner.
\newblock Global rates of convergence of the {MLE}s of log-concave and
  $s$-concave densities.
\newblock {\em The Annals of Statistics}, 44(3):954, 2016.

\bibitem[DYPS20]{dysto2020}
A.~Dytso, S.~Yagli, H.~V. Poor, and S.~Shamai (Shitz).
\newblock The capacity achieving distribution for the amplitude constrained
  additive {G}aussian channel: An upper bound on the number of mass points.
\newblock {\em IEEE Transactions on Information Theory}, 66(4):2006--2022,
  2020.
\newblock arxiv:1901.03264v4.

\bibitem[DZ16]{dicker2016high}
Lee~H Dicker and Sihai~D Zhao.
\newblock High-dimensional classification via nonparametric empirical bayes and
  maximum likelihood inference.
\newblock {\em Biometrika}, 103(1):21--34, 2016.

\bibitem[Egg58]{eggleston1958convexity}
H.~G. Eggleston.
\newblock {\em Convexity}, volume~47 of {\em Tracts in Math and Math. Phys}.
\newblock Cambridge University Press, 1958.

\bibitem[GG71]{good1971nonparametric}
IJ~Good and Ray~A Gaskins.
\newblock Nonparametric roughness penalties for probability densities.
\newblock {\em Biometrika}, 58(2):255--277, 1971.

\bibitem[GJ14]{groeneboom2014nonparametric}
Piet Groeneboom and Geurt Jongbloed.
\newblock {\em Nonparametric estimation under shape constraints}, volume~38.
\newblock Cambridge University Press, 2014.

\bibitem[GL93]{groeneboom1993isotonic}
Piet Groeneboom and HP~Lopuhaa.
\newblock Isotonic estimators of monotone densities and distribution functions:
  basic facts.
\newblock {\em Statistica Neerlandica}, 47(3):175--183, 1993.

\bibitem[Gre56]{grenander1956theory}
Ulf Grenander.
\newblock On the theory of mortality measurement. {P}art {II}.
\newblock {\em Scandinavian Actuarial Journal}, 1956(2):125--153, 1956.

\bibitem[Gre81]{grenander1981abstract}
Ulf Grenander.
\newblock {\em Abstract inference}.
\newblock John Wiley \& Sons, New York, 1981.

\bibitem[Gro11]{groeneboom2011vertices}
Piet Groeneboom.
\newblock Vertices of the least concave majorant of brownian motion with
  parabolic drift.
\newblock {\em Electronic Journal of Probability}, 16:2334--2358, 2011.

\bibitem[Gro20]{groeneboom2020}
Piet Groeneboom.
\newblock Grenander functionals and {C}auchy's formula.
\newblock {\em Scandinavian Journal of Statistics}, pages 1--20, 2020.
\newblock arXiv preprint arXiv:1902.08806.

\bibitem[GvdV01]{ghosal.vdv}
S.~Ghosal and A.W. van~der Vaart.
\newblock {Entropies and rates of convergence for maximum likelihood and Bayes
  estimation for mixtures of normal densities}.
\newblock {\em The Annals of Statistics}, 29(5):1233--1263, 2001.

\bibitem[GvdV07]{ghosal2007posterior}
Subhashis Ghosal and Aad van~der Vaart.
\newblock Posterior convergence rates of {D}irichlet mixtures at smooth
  densities.
\newblock {\em The Annals of Statistics}, 35(2):697--723, 2007.

\bibitem[GW92]{groeneboom1992information}
Piet Groeneboom and Jon~A Wellner.
\newblock {\em Information bounds and nonparametric maximum likelihood
  estimation}, volume~19.
\newblock Springer Science \& Business Media, 1992.

\bibitem[GW00]{genovese.wasserman}
C.~R. Genovese and L.~Wasserman.
\newblock {Rates of convergence for the Gaussian mixture sieve}.
\newblock {\em Annals of Statistics}, 28(4):1105--1127, 2000.

\bibitem[Han19]{han2019global}
Qiyang Han.
\newblock Global empirical risk minimizers with "shape constraints" are rate
  optimal in general dimensions.
\newblock {\em arXiv preprint arXiv:1905.12823}, 2019.

\bibitem[Jew82]{Jewell1982}
Nicholas~P Jewell.
\newblock Mixtures of exponential distributions.
\newblock {\em The Annals of Statistics}, 10(2):479--484, 1982.

\bibitem[JZ09]{jiang2009general}
Wenhua Jiang and Cun-Hui Zhang.
\newblock General maximum likelihood empirical bayes estimation of normal
  means.
\newblock {\em The Annals of Statistics}, 37(4):1647--1684, 2009.

\bibitem[JZB{\etalchar{+}}16]{jin2016local}
Chi Jin, Yuchen Zhang, Sivaraman Balakrishnan, Martin~J Wainwright, and
  Michael~I Jordan.
\newblock Local maxima in the likelihood of {G}aussian mixture models:
  Structural results and algorithmic consequences.
\newblock In {\em Advances in neural information processing systems}, pages
  4116--4124, 2016.

\bibitem[KDR19]{kur2019optimality}
Gil Kur, Yuval Dagan, and Alexander Rakhlin.
\newblock Optimality of maximum likelihood for log-concave density estimation
  and bounded convex regression.
\newblock {\em arXiv preprint arXiv:1903.05315}, 2019.

\bibitem[Ker00]{keribin2000consistent}
Christine Keribin.
\newblock Consistent estimation of the order of mixture models.
\newblock {\em Sankhy{\=a}: The Indian Journal of Statistics, Series A},
  62(1):49--66, 2000.

\bibitem[KG19]{koenker2019comment}
Roger Koenker and Jiaying Gu.
\newblock Comment: Minimalist $ g $-modeling.
\newblock {\em Statistical Science}, 34(2):209--213, 2019.

\bibitem[Kim14]{kim2014minimax}
Arlene~KH Kim.
\newblock Minimax bounds for estimation of normal mixtures.
\newblock {\em Bernoulli}, 20(4):1802--1818, 2014.

\bibitem[KK20]{KK20}
Navin Kashyap and Manjunath Krishnapur.
\newblock How many modes can a constrained {G}aussian mixture have?
\newblock {\em Arxiv preprint arXiv:2005.01580}, April 2020.

\bibitem[KM14]{koenker2014convex}
Roger Koenker and Ivan Mizera.
\newblock Convex optimization, shape constraints, compound decisions, and
  empirical {B}ayes rules.
\newblock {\em Journal of the American Statistical Association},
  109(506):674--685, 2014.

\bibitem[KS16]{kim2016global}
Arlene~KH Kim and Richard~J Samworth.
\newblock Global rates of convergence in log-concave density estimation.
\newblock {\em The Annals of Statistics}, 44(6):2756--2779, 2016.

\bibitem[KW56]{KW56}
Jack Kiefer and Jacob Wolfowitz.
\newblock Consistency of the maximum likelihood estimator in the presence of
  infinitely many incidental parameters.
\newblock {\em The Annals of Mathematical Statistics}, pages 887--906, 1956.

\bibitem[Lai78]{Laird1978}
Nan Laird.
\newblock Nonparametric maximum likelihood estimation of a mixing distribution.
\newblock {\em Journal of the American Statistical Association},
  73(364):805--811, 1978.

\bibitem[Ler92]{leroux1992consistent}
Brian~G Leroux.
\newblock Consistent estimation of a mixing distribution.
\newblock {\em The Annals of Statistics}, 20(3):1350--1360, 1992.

\bibitem[Lin83a]{lindsay1983geometry1}
Bruce~G Lindsay.
\newblock The geometry of mixture likelihoods: a general theory.
\newblock {\em The Annals of Statistics}, 11(1):86--94, 1983.

\bibitem[Lin83b]{lindsay1983geometry2}
Bruce~G Lindsay.
\newblock The geometry of mixture likelihoods, part {II}: the exponential
  family.
\newblock {\em The Annals of Statistics}, 11(3):783--792, 1983.

\bibitem[Lin95]{Lindsay1995}
Bruce~G Lindsay.
\newblock Mixture models: theory, geometry and applications.
\newblock In {\em NSF-CBMS regional conference series in probability and
  statistics}, pages i--163. JSTOR, 1995.

\bibitem[LR93]{lindsay1993uniqueness}
Bruce~G Lindsay and Kathryn Roeder.
\newblock Uniqueness of estimation and identifiability in mixture models.
\newblock {\em Canadian Journal of Statistics}, 21(2):139--147, 1993.

\bibitem[MM11]{maugis2011non}
Cathy Maugis and Bertrand Michel.
\newblock A non asymptotic penalized criterion for gaussian mixture model
  selection.
\newblock {\em ESAIM: Probability and Statistics}, 15:41--68, 2011.

\bibitem[PW20]{PW20inapprox}
Yury Polyanskiy and Yihong Wu.
\newblock Note on approximating the {L}aplace transform of a {G}aussian on a
  unit disk.
\newblock {\em arXiv preprint arXiv:2008.13372}, 2020.

\bibitem[PWM07]{pal2007estimating}
Jayanta~Kumar Pal, Michael Woodroofe, and Mary Meyer.
\newblock {\em Estimating a Polya frequency function$_2$}, volume Volume 54 of
  {\em Lecture Notes--Monograph Series}, pages 239--249.
\newblock Institute of Mathematical Statistics, Beachwood, Ohio, USA, 2007.

\bibitem[Rob50]{robbins1950generalization}
Herbert Robbins.
\newblock A generalization of the method of maximum likelihood: {E}stimating a
  mixing distribution ({A}bstract).
\newblock In {\em Annals of Mathematical Statistics}, volume~21, pages
  314--315, 1950.

\bibitem[SA54]{andersen1954fluctuations}
Erik Sparre~Andersen.
\newblock On the fluctuations of sums of random variables.
\newblock {\em Mathematica Scandinavica}, pages 263--285, 1954.

\bibitem[SG20]{saha2020nonparametric}
Sujayam Saha and Adityanand Guntuboyina.
\newblock On the nonparametric maximum likelihood estimator for gaussian
  location mixture densities with application to gaussian denoising.
\newblock {\em The Annals of Statistics}, 48(2):738--762, 2020.

\bibitem[Sil82]{silverman1982estimation}
Bernard~W Silverman.
\newblock On the estimation of a probability density function by the maximum
  penalized likelihood method.
\newblock {\em The Annals of Statistics}, pages 795--810, 1982.

\bibitem[Sim76]{simar1976maximum}
L\'{e}opold Simar.
\newblock Maximum likelihood estimation of a compound poisson process.
\newblock {\em The Annals of Statistics}, pages 1200--1209, 1976.

\bibitem[SW14]{saumard2014log}
Adrien Saumard and Jon~A Wellner.
\newblock Log-concavity and strong log-concavity: a review.
\newblock {\em Statistics surveys}, 8:45--114, 2014.

\bibitem[Tij71]{tijdeman71}
R.~Tijdeman.
\newblock On the number of zeros of general exponential polynomials.
\newblock {\em Indagationes Mathematicae (Proceedings)}, 74:1 -- 7, 1971.

\bibitem[vdG00]{vandeGeer2000}
Sara van~de Geer.
\newblock {\em Empirical Processes in {M}-Estimation}.
\newblock Cambridge University Press, 2000.

\bibitem[WV10]{WV2010}
Yihong Wu and Sergio Verd{\'u}.
\newblock The impact of constellation cardinality on {G}aussian channel
  capacity.
\newblock In {\em 2010 48th Annual Allerton Conference on Communication,
  Control, and Computing (Allerton)}, pages 620--628. IEEE, 2010.

\bibitem[WY20]{WY18}
Yihong Wu and Pengkun Yang.
\newblock Optimal estimation of {G}aussian mixtures with denoised method of
  moments.
\newblock {\em The Annals of Statistics}, 48(4):1981--2007, 2020.
\newblock arxiv:1807.07237.

\bibitem[Zha09]{zhang2009generalized}
Cun-Hui Zhang.
\newblock Generalized maximum likelihood estimation of normal mixture
  densities.
\newblock {\em Statistica Sinica}, pages 1297--1318, 2009.

\end{thebibliography}

\newcommand{\etalchar}[1]{$^{#1}$}

\end{document}